\newcommand{\proofpart}[2]{%
    \par
  \addvspace{\medskipamount}%
  \noindent\emph{Step #1: #2}\par\nobreak
  \addvspace{\smallskipamount}%
  \@afterheading
}
\DeclarePairedDelimiter\abs{\lvert}{\rvert}%
\DeclarePairedDelimiter\norm{\lVert}{\rVert}%
\let\oldabs\abs
\def\abs{\@ifstar{\oldabs}{\oldabs*}}
\let\oldnorm\norm
\def\norm{\@ifstar{\oldnorm}{\oldnorm*}}
\g@addto@macro\bfseries{\boldmath}
\newcommand{\C}{\mathbb{C}}
\newcommand{\T}{\mathbb{T}}
\newcommand{\Ka}{\mathcal{K}}
\newcommand{\conj}[1]{\overline{#1}}
\newcommand{\D}{\mathbb{D}}
\newcommand{\B}{\mathcal{B}}
\newcommand{\cD}{\conj{\mathbb{D}}}
\newcommand{\m}{\textit{m}}
\newcommand{\hil}{\mathcal{H}}
\newcommand{\supp}[1]{\text{supp}({#1})}
\newtheorem{thm}{Theorem}[section]
\newtheorem{lemma}[thm]{Lemma}
\newtheorem{cor}[thm]{Corollary}
\newtheorem{prop}[thm]{Proposition}
\theoremstyle{definition}
\theoremstyle{definition}
\newcommand{\Addresses}{{
		\bigskip
		\footnotesize
		
		Adem Limani, \\ \textsc{Centre for Mathematical Sciences \\ Lund University \\
		Lund, Sweden}\\
		\texttt{adem.limani@math.lu.se}
		
			
	}}
\begin{document}
\title{\textbf{Fourier coefficients of normalized Cauchy transforms }} 

\author{Adem Limani} 
\address{Centre for Mathematical Sciences, Lund University, Sweden}
\email{adem.limani@math.lu.se}

\date{\today}

\begin{abstract}
We consider a uniqueness problem concerning the Fourier coefficients of normalized Cauchy transforms. These problems inherently involve proving a simultaneous approximation phenomenon and establishing the existence of cyclic inner functions in certain sequence spaces. Our results have several applications in different directions. First, we offer a new non-probabilistic proof of a classic theorem by Kahane and Katzenelson on simultaneous approximation. Secondly, we demonstrate the absence of uniform admissible majorants of Fourier coefficients in de Branges-Rovnyak spaces.



\end{abstract}

\maketitle
\section{Introduction}\label{SEC:INTRO}

\subsection{Normalized Cauchy transforms} Let $\mu$ be a positive finite Borel measure on the unit-circle $\T$. We denote by $\Ka_{\mu}$ the normalized Cauchy transform acting on an element $f\in L^1(\mu)$ via the formula
\[
\Ka_{\mu}f (z) = \frac{\Ka(fd\mu)(z)}{\Ka(d\mu)(z)}, 
\qquad z\in \D
\]
where $\Ka(fd\mu)(z) = \int_{\T} \frac{f(\zeta)d\mu(\zeta)}{1-\conj{\zeta}z}$ denotes the Cauchy integral of $fd\mu$ and $\D$ denotes the unit-disc in the complex plane. 
Any positive finite Borel measure $\mu$ on $\T$ gives rise to holomorphic self-map $b: \D \to \D$ via the formula 
\[
\frac{1+b(z)}{1-b(z)} = \int_{\T} \frac{\zeta +z}{\zeta-z} d\mu(\zeta) + i\alpha, \qquad z\in \D
\]
where $\alpha$ is a number uniquely determined by the value $b(0)$. A straightforward calculation shows that 
\begin{equation}\label{EQ:KerHb}
\Ka_{\mu} \kappa_{\lambda} (z) = \frac{1-\conj{b(\lambda)}b(z)}{1-\conj{\lambda}z}, \qquad z, \lambda \in \D
\end{equation}
where $\kappa_{\lambda}(\zeta)= (1-\conj{\lambda}\zeta)^{-1}$ denotes the so-called Cauchy kernels. With this formula at hand, the classical Aleksandrov-Clark theory asserts that the associated normalized Cauchy transform $\Ka_{\mu}$ can be extended to a unitary operator from $L^2(\mu)$ to the image $\Ka_{\mu} L^2(\mu)$, which becomes a reproducing kernel Hilbert space of holomorphic functions in $\D$, with kernels given by \eqref{EQ:KerHb}, see \cite{cima2000backward}. These spaces are referred to as de Branges-Rovnyak, typically denoted by $\hil(b)$, and appear as functional models for a certain class of Hilbert spaces contractions. For instance, if $\mu$ is singular wrt $dm$, then the corresponding symbol $\Theta$ for the associated de Branges-space $\hil(\Theta)$ is an inner function: $\lim_{r\to 1-} \abs{\Theta(r\zeta)} =1$ for $dm$-a.e $\zeta \in \T$. These spaces are known as model spaces and are given a distinct designation, as they can be identified as the closed subspace $K_{\Theta} := H^2 \ominus \Theta H^2$, where $H^2$ denotes the Hardy space with the associated norm
\[
\norm{f}^2_{H^2} := \sum_{n=0}^\infty \abs{\widehat{f}(n)}^2 < \infty.
\]
For a detailed treatment of the Hardy spaces and model spaces, we refer the reader to \cite{cima2000backward}. Analogous to the classical Fourier transform, the normalized Cauchy transform $\Ka_{\mu}$ establishes a unitary equivalence between the space $L^2(\mu)$ and $\hil(b)$, with further applications in the spectral theory of Schr\"odinger operators, see \cite{makarov2005meromorphic}. Recently, an intimate link between de Branges-Rovnyak spaces and the spectral theory of subnormal operator was established in \cite{limani2023problem}. In \cite{poltoratski2003maximal}, A. Poltoratski investigated continuity properties of the normalized Cauchy transform and their maximal counterpart, proving that $\Ka_{\mu}$ maps $L^p(d\mu)$ continuously into the Hardy space $H^p$ for $1<p \leq 2$. Here we shall study the image of normalized Cauchy transforms from a different perspective. Our main question of concern is, how "good/bad" can the Fourier coefficients of functions in the images $\Ka_{\mu} L^1(\mu)$ be? The following result stems from the authors recent work with B. Malman, and may be viewed as an extension of a classical theorem in \cite{aleksandrovinv} (see also \cite{dbrcont}) due to A.B. Aleksandrov.

\begin{thm}[See \cite{limani2022abstract}] \thlabel{THM:AlekUa} For any positive finite Borel measure $\mu$ on $\T$, the image $\Ka_{\mu}L^1(\mu)$ contains functions $f$ with uniformly convergent Fourier series on $\T$. In fact, the set of such functions form a dense subspace in the corresponding de Branges-Rovnyak space.
\end{thm}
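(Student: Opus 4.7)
The plan is to exploit the unitarity of the normalized Cauchy transform $\Ka_\mu \colon L^2(\mu) \to \hil(b)$ given by Aleksandrov-Clark theory. Since $\mu$ is finite we have $L^2(\mu) \subset L^1(\mu)$, so the density claim in $\hil(b)$ reduces, via the isometry, to proving that
\[
\mathcal{G} := \{g \in L^2(\mu) : \Ka_\mu g \text{ has uniformly convergent Fourier series on } \T\}
\]
is a dense subspace of $L^2(\mu)$. That $\mathcal{G}$ is a linear subspace is automatic from the linearity of $\Ka_\mu$ and of the uniform-convergence condition, so the entire task becomes one of density.

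The first step is to dispatch the purely singular case. If $\mu \perp m$, then $b = \Theta$ is inner, $\hil(b) = K_\Theta$ is a classical model space, and Aleksandrov's theorem strengthened by a Kahane-Katznelson-type smoothing guarantees that $K_\Theta$ contains a dense set of functions with uniformly convergent Fourier series. The next step handles the general symbol $b$ via the Lebesgue decomposition $\mu = \mu_s + \mu_{ac}$, splitting $\hil(b)$ along the corresponding structural decomposition from de Branges-Rovnyak theory and approximating within each component.

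The main obstacle, corresponding to the \emph{simultaneous approximation} phenomenon alluded to in the abstract, is that one must merge these partial approximations coherently: given $g \in L^2(\mu)$ and $\varepsilon > 0$, one needs to produce $g_\varepsilon \in L^2(\mu)$ with $\|g - g_\varepsilon\|_{L^2(\mu)} < \varepsilon$ and such that $\Ka_\mu g_\varepsilon$ has uniformly convergent Fourier series. The nonlinearity of the quotient $\Ka(g\, d\mu)/\Ka(d\mu)$ means that uniform convergence of Fourier series is not preserved under generic $L^2(\mu)$-approximation, and the reproducing kernels $\frac{1 - \conj{b(\lambda)}b(z)}{1-\conj{\lambda}z}$ already fail to have this property in the presence of discontinuous boundary behaviour of $b$. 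The intended remedy is to compose with a carefully chosen Fourier multiplier that tames the tail of the Fourier expansion of $\Ka_\mu g$; the multiplier is built as the symbol of a cyclic inner function in an appropriately weighted $\ell^2$ sequence space associated with the Clark isometry, so that the perturbation is controllably small in the $L^2(\mu)$-norm while producing uniform convergence on $\T$. This yields the simultaneous control of the $L^2(\mu)$-error and of the Fourier tail that the result demands.
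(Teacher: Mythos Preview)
The paper does not contain a proof of this theorem: it is quoted in the introduction as a known result, with the attribution ``See \cite{limani2022abstract}'', and is used only as background motivation. There is therefore no ``paper's own proof'' against which to compare your proposal.

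That said, your proposal does not yet amount to a proof. Two concrete issues:

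\textbf{(1) The singular case is assumed, not proved.} You write that for $\mu \perp m$, ``Aleksandrov's theorem strengthened by a Kahane--Katznelson-type smoothing guarantees that $K_\Theta$ contains a dense set of functions with uniformly convergent Fourier series.'' This is essentially the statement of the theorem in that case; invoking it by name is not an argument. The actual work in \cite{aleksandrovinv} (and its extension in \cite{limani2022abstract}) lies precisely in constructing such functions.

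\textbf{(2) The merging step is not a construction.} For the general measure, you propose to ``compose with a carefully chosen Fourier multiplier \ldots\ built as the symbol of a cyclic inner function in an appropriately weighted $\ell^2$ sequence space associated with the Clark isometry.'' This is not a mechanism: you have not said which weight, which inner function, why cyclicity in that space yields a multiplier that forces uniform convergence of the Fourier series of $\Ka_\mu g_\varepsilon$, or why the perturbation is small in $L^2(\mu)$. As written, this paragraph borrows vocabulary without supplying an argument. (Incidentally, the map $g \mapsto \Ka_\mu g = \Ka(g\,d\mu)/\Ka(d\mu)$ is linear in $g$; the difficulty you correctly sense is not ``nonlinearity'' but rather that the $\hil(b)$-topology does not control the uniform-convergence property.)

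If you wish to supply a genuine proof, you will need to either reproduce the construction from \cite{limani2022abstract} or give an independent one; the outline above does neither.
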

This means that for any positive finite Borel measure $\mu$, the image $\Ka_{\mu}L^1(d \mu)$ always contains an abundance of functions with fairly nice boundary behavior. We shall investigate whether the Fourier coefficients of $\Ka_{\mu}L^1(d \mu)$ can be better than merely $\ell^2$-summable.

\section{Main results}\label{SEC:MainResults}
\subsection{Fourier coefficients of normalized Cauchy transforms}
Throughout, we shall let $\Phi$ be a so-called Young function, that is, a convex function on $[0,\infty)$ with the properties that $\Phi(0)=0$ and $\Phi(2x) \asymp \Phi(x)$ as $x\to 0$. We define by $\ell^{\Phi}_a$ the associated Orlicz space of holomorphic functions $f$ in $\D$ satisfying
\[
\sum_{n=0}^\infty \Phi \left(\abs{\widehat{f}(n)} \right) < \infty,
\]
where $(\widehat{f}(n))_{n=0}^{\infty}$ denotes the Taylor (Fourier) coefficients of $f$. Equipped with the norm  
\[
\norm{f}_{\Phi} := \inf \left\{M>0: \sum_{n=0}^\infty \Phi \left(\abs{\widehat{f}(n)}/M \right) \leq 1 \right\},
\]
one can show that $\ell^{\Phi}_a$ becomes a separable Banach space, containing the holomorphic polynomials as a dense subset. For further details on these matters, we refer the reader to Lindenstrauss and Tzafriri in \cite{lindenstrauss2013classical}. For instance, when $\Phi(t)=t^p$, then we retrieve the familiar spaces $\ell^p_a$ of holomorphic functions $f$ in $\D$ with 
\[
\norm{f}_p := \left( \sum_{n=0}^\infty \abs{\widehat{f}(n)}^p \right)^{1/p} <\infty. 
\]
With this notation, we have $H^2 = \ell^2_a$. Recall that the classical Clark Theory ensures that image of normalized Cauchy transform under $L^2(\mu)$ always gives rise to functions with $\ell^2$-summable Fourier coefficients, effectively due to the fact that the de Branges-spaces $\hil(b)$ are continuously contained in $H^2$. Our main result confirms that this feature cannot be furnished any further.
\begin{thm}\thlabel{THM:UniqNCT} Let $\Phi$ be a Young function with $\lim_{x\to 0+}\Phi(x)/x^2 = \infty$. Then the following statements hold:
\begin{enumerate}
    \item[(i)] There exists a positive measure $\mu$ singular wrt $dm$ such that
    \[
    \Ka_{\mu} L^1(\mu) \cap \ell^{\Phi}_a =\{0\}.
    \]
    \item[(ii)] There exists a positive absolutely continuous measure $\mu$ such that 
    \[
    \Ka_{\mu} L^1(\mu) \cap \ell^{\Phi}_a =\{0\}.
    \]
\end{enumerate}
    
\end{thm}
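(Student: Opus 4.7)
The plan is to reduce both parts to a cyclicity statement in the sequence space $\ell^{\Phi}_a$, and then construct the cyclic vectors via a non-probabilistic simultaneous approximation. The cyclicity principle is as follows. The hypothesis $\lim_{x\to 0+}\Phi(x)/x^2 = \infty$ yields a continuous embedding $\ell^{\Phi}_a \hookrightarrow H^2$. Call $h \in \ell^{\Phi}_a$ \emph{cyclic} if the closure of $\text{span}\{z^n h : n \geq 0\}$ in $\ell^{\Phi}_a$ equals $\ell^{\Phi}_a$. For any $f \in H^2$ orthogonal (in $H^2$) to all $z^n h$, the functional $g \mapsto \langle g, f\rangle_{H^2}$ is continuous on $\ell^{\Phi}_a$, vanishes on a dense set, hence vanishes on all polynomials; since polynomials are dense in $H^2$, it follows that $f = 0$. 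Producing a cyclic $h$ of the right form will therefore force $\Ka_{\mu} L^1(\mu) \cap \ell^{\Phi}_a = \{0\}$.

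For (i), take $\mu$ singular so that its symbol is an inner function $\Theta$. Aleksandrov--Clark theory yields $\Ka_{\mu} L^1(\mu) \cap H^2 \subseteq K_{\Theta}$, so every element of the intersection is $H^2$-orthogonal to $z^n \Theta$ for all $n \geq 0$. Thus part (i) reduces to constructing a singular inner $\Theta \in \ell^{\Phi}_a$ which is cyclic in $\ell^{\Phi}_a$, i.e., to exhibiting polynomials $p_n$ with $\|1 - p_n \Theta\|_{\ell^{\Phi}_a} \to 0$. For (ii) one proceeds analogously with an absolutely continuous $\mu$ and outer symbol $b$, the role of $K_{\Theta}$ being played by $\hil(b)$, and the cyclic vector now being a suitable outer function in $\ell^{\Phi}_a$.

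The main obstacle is the construction of the cyclic function: a non-probabilistic simultaneous approximation scheme in the spirit of Kahane--Katznelson. Inductively, one produces finite products $\Theta_n$ (of singular inner factors, or finite Blaschke products in case (ii)) together with polynomials $p_n$ so that $\|1 - p_n \Theta_n\|_{\ell^{\Phi}_a}$ decreases at a controlled rate, while the $\Theta_n$ converge in $H^2$ and pointwise to a singular inner (respectively outer) function $\Theta$. The subquadratic hypothesis on $\Phi$ near zero provides precisely the slack needed to absorb high-frequency residuals from the inner/outer factors in Orlicz norm without violating the modulus-one (respectively extreme-point) constraint on $\T$. Avoiding the classical random-polynomial constructions and replacing them by an explicit inductive correction scheme is the delicate combinatorial heart of the proof, and is exactly the ingredient that will simultaneously yield the promised non-probabilistic proof of the Kahane--Katznelson theorem flagged in the abstract.
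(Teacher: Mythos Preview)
Your reduction has the duality backwards, and this is fatal. You propose to make a singular inner function $\Theta$ cyclic in $\ell^{\Phi}_a$, i.e.\ to find polynomials $p_n$ with $\|1-p_n\Theta\|_{\ell^{\Phi}_a}\to 0$. But the hypothesis $\Phi(x)/x^2\to\infty$ gives a continuous embedding $\ell^{\Phi}_a\hookrightarrow H^2$, so $p_n\Theta\to 1$ in $\ell^{\Phi}_a$ would force $p_n\Theta\to 1$ in $H^2$; since $\Theta H^2$ is closed in $H^2$ and does not contain $1$, this is impossible for any non-constant inner $\Theta$. In fact your own cyclicity principle already shows the contradiction: applied to $h=\Theta$ it would yield $K_\Theta=\{0\}$ in all of $H^2$, not merely $K_\Theta\cap\ell^{\Phi}_a=\{0\}$. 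The same obstruction applies verbatim to part~(ii): no non-invertible outer function can be cyclic in a space continuously contained in $H^2$.

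The correct dual reformulation, and the one the paper uses, is that $K_\Theta\cap\ell^{\Phi}_a=\{0\}$ is equivalent to $\Theta$ being cyclic in the \emph{conjugate} Orlicz space $\ell^{\Phi^*}_a$, which satisfies $H^2\subset\ell^{\Phi^*}_a$ by \thref{LEM:Conconj}; this is \thref{PROP:dualcyc}, proved by Hahn--Banach together with the pairing inequality. Cyclicity of an inner function in a space strictly larger than $H^2$ is no longer obstructed. The paper then does not attack cyclicity in $\ell^{\Phi^*}_a$ directly: it constructs a majorant $w$ violating the square Dini condition so that $\B_0(w)\hookrightarrow\ell^{\Phi^*}_a$ continuously (\thref{THM:Bwlp}), and produces a cyclic singular inner function in $\B_0(w)$ via the Anderson--Fernandez--Shields singular measure with controlled modulus of continuity and smoothness (\thref{THM:CycinnerBw}). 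Part~(ii) likewise passes through the dual: one needs a compact $K\subset\T$ with the simultaneous approximation property in $\ell^{\Phi^*}_a$, which via \thref{PROP:dbrX'} forces $\hil(b)\cap\ell^{\Phi}_a=\{0\}$ for appropriate outer $b$; the set $K$ is again obtained through the $\B_0(w)$ detour. Your inductive correction scheme, if it can be made to work, would have to target $\ell^{\Phi^*}_a$ rather than $\ell^{\Phi}_a$.
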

The above result is new even for the sequence spaces $\ell^p_a$ with $1\leq p < 2$, hence there exists a positive Borel measures $\mu$ on $\T$, whose normalized Cauchy transform contains no non-trivial element with $\ell^p$-summable Fourier coefficients. Using Clark Theory, we obtain the following immediate corollary, phrased in the framework of de Branges-Rovnyak spaces.

\begin{cor}\thlabel{cor:BM1type} Let $\Phi$ be a Young function with $\Phi(x)/x^2 \to \infty$ as $x \to 0+$. Then the following statements hold: 
\begin{enumerate}
    \item[(i)] There exists a singular inner function $\Theta$, such that for any $f \in K_{\Theta}$, we have 
\[
\sum_{n=0}^\infty \Phi \left(  \abs{\widehat{f}(n)} \right) < \infty \implies f\equiv 0.
\]
    \item[(ii)] There exists an outer function $b$, such that for any $f \in \hil(b)$, we have 
\[
\sum_{n=0}^\infty \Phi \left(  \abs{\widehat{f}(n)} \right) < \infty \implies f\equiv 0.
\]
\end{enumerate}

\end{cor}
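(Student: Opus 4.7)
The plan is to translate \thref{THM:UniqNCT} into the de Branges--Rovnyak setting using the Aleksandrov--Clark correspondence. For any finite positive Borel measure $\mu$ on $\T$, the kernel identity \eqref{EQ:KerHb} together with the standard reproducing-kernel argument shows that $\Ka_{\mu}$ extends to a unitary isomorphism from $L^2(\mu)$ onto $\hil(b)$, where $b$ is the symbol attached to $\mu$ via the Herglotz-type formula in Section~\ref{SEC:INTRO}. Since $\mu$ is finite, $L^2(\mu) \subseteq L^1(\mu)$, so
\[
\hil(b) \;=\; \Ka_{\mu}L^2(\mu) \;\subseteq\; \Ka_{\mu}L^1(\mu).
\]
Consequently, any uniqueness statement of the form $\Ka_{\mu}L^1(\mu) \cap \ell^{\Phi}_a = \{0\}$ transfers immediately to $\hil(b) \cap \ell^{\Phi}_a = \{0\}$.

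For Part (i), I would apply \thref{THM:UniqNCT}(i) to obtain a singular measure $\mu$. As recalled in the introduction, $\mu \perp dm$ forces the associated symbol $\Theta$ to be inner, so that $\hil(\Theta) = K_{\Theta}$; the inclusion above then yields $K_{\Theta} \cap \ell^{\Phi}_a = \{0\}$. By further arranging in the construction of \thref{THM:UniqNCT}(i) that $\mu$ has no atoms, $\Theta$ is purely singular. For Part (ii), \thref{THM:UniqNCT}(ii) furnishes an absolutely continuous measure $\mu = w\,dm$; the same inclusion at once gives $\hil(b) \cap \ell^{\Phi}_a = \{0\}$.

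The only non-automatic ingredient, and the step I expect to be the main obstacle, is to verify that the symbol $b$ in Part (ii) can actually be chosen \emph{outer}. The identity
\[
\frac{1-|b|^2}{|1-b|^2} \;=\; Pw,
\]
together with $w>0$ on a set of positive measure, guarantees that $b$ is non-extreme, but this does not preclude $b$ from having a non-trivial inner factor. One must therefore revisit the construction of \thref{THM:UniqNCT}(ii) and arrange the density $w$ so that the resulting $b$ is zero-free in $\D$ and has no singular inner factor (equivalently, $\log|b| \in L^1(\T)$ and $b$ is nowhere vanishing). This amounts to a suitable integrability requirement on $w$ (such as $\log w \in L^1$), which should be compatible with the flexibility in the proof of \thref{THM:UniqNCT}(ii) without disturbing the vanishing property $\Ka_{\mu}L^1(\mu) \cap \ell^{\Phi}_a = \{0\}$.
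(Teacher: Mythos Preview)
For part (i) your reduction is essentially right: a singular Clark measure gives an inner symbol $\Theta$ with $K_\Theta=\Ka_\mu L^2(\mu)\subseteq\Ka_\mu L^1(\mu)$, whence $K_\Theta\cap\ell^\Phi_a=\{0\}$. The only unjustified step is ``$\mu$ atomless $\Rightarrow$ $\Theta$ singular inner'': atoms of the Clark measure $\mu_1$ encode boundary points where $\Theta$ has a finite angular derivative with value $1$, not interior zeros of $\Theta$, so an atomless Clark measure can perfectly well arise from an inner function with a non-trivial Blaschke factor.

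The genuine gap is in part (ii). Your assertion that $w>0$ on a set of positive measure makes $b$ non-extreme is false, and in fact the opposite is forced: if $b$ were non-extreme then $\hil(b)$ would contain all polynomials, so $1\in\hil(b)\cap\ell^\Phi_a$, contradicting the conclusion (the paper notes this immediately after the statement). Worse, your proposed fix of arranging $\log w\in L^1$ is exactly what must be avoided: on $\T$ one has $\log(1-|b|^2)=\log w+2\log|1-b|$ with $\log|1-b|$ always integrable, so $\log w\in L^1$ is \emph{equivalent} to $b$ being non-extreme. Thus the integrability you impose on the density destroys the very uniqueness property you need. Nor does absolute continuity of $\mu$ force $b$ to be outer in general.

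The paper therefore proceeds in the opposite direction, proving \thref{cor:BM1type} first and reading \thref{THM:UniqNCT} off as an equivalent reformulation. For (i) the singular inner function is built directly via \thref{THM:CycinnerBw}, pushed into $\ell^{\Phi^*}_a$ by the embedding of \thref{THM:Bwlp}, and then \thref{PROP:dualcyc} dualizes cyclicity to $K_\Theta\cap\ell^\Phi_a=\{0\}$. For (ii) one bypasses Clark densities entirely: given a compact $K$ with the SA-property in $\ell^{\Phi^*}_a$ (supplied by \thref{THM:CAUUniqlp}), \thref{PROP:dbrX'} shows that the explicit outer function $b(z)=\exp\bigl(-\int_K\frac{\zeta+z}{\zeta-z}\,dm\bigr)$ does the job; it is outer and $H^\infty$-invertible by construction, and extreme because $|b|=1$ on $\T\setminus K$.
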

We remark that the holomorphic self-map $b$ in the statement $(ii)$ must necessarily be an extreme point in the unit-sphere of the Banach space $H^\infty$ of bounded holomorphic functions in $\D$. This geometric property is characterized by the size condition 
\[
\int_{\T} \log(1-|b|) dm = -\infty,
\]
and typically provides a rough dichotomy in the theory of de Branges-Rovnyak spaces, see \cite{sarasonbook}. Note that \thref{cor:BM1type} answers a question raised by B. Malman in \cite{malman2022cyclic} in the context of $\ell^p_a$-spaces.


\subsection{Cyclic inner functions} \label{SEC:Cyc}
As indicated, the problem of images of normalized Cauchy transform associated to singular measures, boils down to finding a model space which contains no non-trivial function whose Fourier coefficients are $\ell^{\Phi}_a$-summable. A dual reformulation of that problems entails in proving the following result on existence of cyclic inner functions.

\begin{thm} \thlabel{THM:CycinnerlPsi}
Let $\Psi$ be a Young function with $\Psi(x)/x^2 \downarrow 0$ as $x\downarrow 0$. Then there exists a singular inner function $\Theta$ such that the set
\[
\{z^n \Theta(z): n=0,1,2,\dots \}
\]
has a dense linear span in $\ell^{\Psi}_a$.
    
\end{thm}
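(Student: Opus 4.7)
I would recognize Theorem \thref{THM:CycinnerlPsi} as the Hahn--Banach dual of Corollary \thref{cor:BM1type}(i) and reduce it to that result. The three ingredients are Orlicz-sequence duality, a Legendre--Fenchel calculation for the conjugate $\Psi^*$, and an identification of the annihilator of $\{z^n\Theta\}$ with the model space $K_\Theta$.

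\emph{Duality setup.} The condition $\Psi(2x)\asymp\Psi(x)$ near $0$ built into the definition of a Young function ensures that $\ell^\Psi$ is separable and that its topological dual identifies with the Orlicz sequence space $\ell^{\Psi^*}$ associated to the Legendre--Fenchel conjugate $\Psi^*(y):=\sup_{x\geq 0}(xy-\Psi(x))$, under the pairing $\sum_n a_n\overline{b_n}$ (cf.\ Lindenstrauss--Tzafriri \cite{lindenstrauss2013classical}). Restricting to sequences indexed by $\mathbb{Z}_{\geq 0}$, the dual of $\ell^\Psi_a$ becomes $\ell^{\Psi^*}_a$, with pairing $\langle f,g\rangle=\sum_n\widehat f(n)\,\overline{\widehat g(n)}$.

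\emph{Growth of $\Psi^*$ and annihilator computation.} I would first check that $\Psi^*(y)/y^2\to\infty$ as $y\to 0+$. Given $M>0$, pick $\delta>0$ so that $\Psi(x)\leq x^2/(4M)$ for $x\in(0,\delta]$; for $y$ small enough that $2My\leq\delta$, substitute $x=2My$ into the definition of $\Psi^*$ to obtain
\[
\Psi^*(y) \geq 2My\cdot y - \Psi(2My) \geq 2My^2 - (2My)^2/(4M) = My^2.
\]
In particular $\ell^{\Psi^*}_a\hookrightarrow H^2$ continuously. Next, for any $g\in\ell^{\Psi^*}_a\subset H^2$ and inner $\Theta$, the duality pairing $\langle z^k\Theta,g\rangle$ coincides with the $H^2$ inner product, so $g$ annihilates $\{z^n\Theta:n\geq 0\}$ iff $g\perp\Theta H^2$ in $H^2$, iff $g\in K_\Theta=H^2\ominus\Theta H^2$. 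By the Hahn--Banach theorem, the linear span of $\{z^n\Theta\}$ is dense in $\ell^\Psi_a$ if and only if $K_\Theta\cap\ell^{\Psi^*}_a=\{0\}$.

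\emph{Conclusion and obstacle.} Since $\Psi^*$ is a Young function with $\Psi^*(y)/y^2\to\infty$, Corollary \thref{cor:BM1type}(i) applied with $\Phi=\Psi^*$ furnishes a singular inner function $\Theta$ with $K_\Theta\cap\ell^{\Psi^*}_a=\{0\}$, which by the previous paragraph is exactly the desired cyclic inner function. The main technical subtlety requiring care is the verification that $\Psi^*$ itself satisfies the $\Delta_2$ condition near $0$, so as to qualify as a Young function in the paper's sense; this is not automatic from $\Delta_2$ of $\Psi$, but under the stronger hypothesis $\Psi(x)/x^2\downarrow 0$ it can be extracted from a direct estimate on the conjugate formula. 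Of course, all of the hard constructive work is concentrated in Corollary \thref{cor:BM1type}(i) itself; if the paper instead proves Theorem \thref{THM:CycinnerlPsi} first and deduces the corollary by reversing the duality, one would need a direct construction of $\Theta$, likely via an explicit singular measure with finely tuned spectral properties.
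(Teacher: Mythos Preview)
Your duality reduction is exactly \thref{PROP:dualcyc} in the paper, and your Legendre--Fenchel computation is \thref{LEM:Conconj}. Both are correct. The problem is the direction of the logical flow: in the paper, \thref{cor:BM1type}(i) is \emph{deduced from} \thref{THM:CycinnerlPsi} via this very duality, not the other way around. So as written your argument is circular---precisely the scenario you flag in your last paragraph.

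The paper's actual proof of \thref{THM:CycinnerlPsi} is the direct construction you allude to, and it goes through the weighted Bloch spaces $\B_0(w)$ rather than through explicit spectral properties of a singular measure. The two ingredients are: (1) \thref{THM:Bwlp}, which shows that for any $\Psi$ with $\Psi(x)/x^2\downarrow 0$ one can manufacture a majorant $w$ violating the square Dini condition $\int_0^1 w^2(t)/t\,dt=\infty$ yet with $\B_0(w)$ continuously embedded in $\ell^{\Psi}_a$; and (2) \thref{THM:CycinnerBw}, which for any such $w$ produces a singular inner function $\Theta$ cyclic in $\B_0(w)$, via the Anderson--Fern\'andez--Shields singular measure with prescribed modulus of continuity and smoothness. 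Cyclicity then transfers from $\B_0(w)$ to $\ell^{\Psi}_a$ along the embedding, and polynomial density in $\ell^{\Psi}_a$ finishes. The corollary you invoke is then obtained by applying \thref{PROP:dualcyc} in the reverse direction to the $\Theta$ just built.
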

Again, this result is new even in the classical context of $\ell^p_a$ with $p>2$. \thref{THM:CycinnerlPsi} will be deduced from a different result, which is perhaps interesting in its own right. To this end, we declare a function $w$ on the unit-interval $[0,1)$ to be a majorant if there exists a number $0<\gamma<1$ such that $w(t)/t^{\gamma}$ is non-decreasing on $(0,1]$. Given a majorant $w$, we denote by $\B_0(w)$ the space of holomorphic functions $f$ in $\D$ satisfying 
\[
\lim_{\abs{z}\to 1-} \frac{1-|z|}{w(1-|z|)} f'(z) =0.
\]
The space $\B_0(w)$ may be regarded as the closure of holomorphic polynomials in the Banach space 
\[
\norm{f}_{w} := \abs{f(0) } + \sup_{z \in \D} \frac{1-|z|}{w(1-|z|)} \abs{f'(z)}.
\]
The existence of a cyclic singular inner function in $\B_0$ ($w\equiv 1$) was initially proved in \cite{anderson1991inner}, and further studies of shift invariant subspaces generated by inner functions in $\B_0$ were recently carried out in \cite{limani2023m_z}. The following theorem extends the main theorem in \cite{anderson1991inner}.

\begin{thm}\thlabel{THM:CycinnerBw} Let $w$ be a majorant which violates the square Dini-condition: 
\[
\int_{0}^1 \frac{w^2(t)}{t} dt = + \infty.
\]
Then there exists a singular inner function $\Theta$, such that 
\[
\{z^n \Theta(z): n=0,1,2,\dots \}
\]
has a dense linear span in $\B_0(w)$.
\end{thm}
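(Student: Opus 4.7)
The plan is to reduce \thref{THM:CycinnerBw} to a duality question and then produce $\Theta$ by an explicit construction in which the divergence of $\int_0^1 w^2(t)/t\, dt$ is used decisively. The first step is to identify, through the standard Cauchy pairing of Taylor coefficients, the dual space $\B_0(w)^*$ with a space $X$ of holomorphic functions on $\D$ whose norm is a weighted integral involving $w$; for the classical little-Bloch case $w\equiv 1$, this recovers the familiar $\B_0^* \simeq A^1$. With this duality in place, cyclicity of a singular inner $\Theta$ for the forward shift $M_z$ on $\B_0(w)$ becomes equivalent to the statement that no nonzero $g \in X$ satisfies $\langle z^n \Theta, g\rangle = 0$ for every $n \geq 0$. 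A standard manipulation of these moment conditions translates the orthogonality into the membership of $g$ in (a suitable boundary realization of) the model space $K_\Theta$, so the task reduces to producing a singular inner $\Theta$ with $X \cap K_\Theta = \{0\}$.

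For the construction of $\Theta$, a natural candidate is an infinite atomic singular inner product
\[
\Theta(z) \;=\; \prod_{k} \exp\left( -t_k\, \frac{\zeta_k + z}{\zeta_k - z} \right),
\]
with masses $\{t_k\}$ placed at a carefully chosen sequence $\{\zeta_k\} \subset \T$, the parameters being calibrated so that the boundary growth of $\log(1/|\Theta(z)|)$ is finely matched to $w$. The verification that $X \cap K_\Theta = \{0\}$ would proceed by using the reproducing-kernel structure of $K_\Theta$ together with pointwise estimates relating $|g|$ to $|\Theta|^{-1}$ near $\T$; these estimates should reduce the existence of a nonzero $g$ in the intersection to the convergence of $\int_0^1 w^2(t)/t\, dt$, yielding the desired contradiction.

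The principal obstacle, in my view, is the double calibration between the construction of $\Theta$ and the dual space $X$: one must tune $(t_k, \zeta_k)$ so that $\Theta$ is a genuine, nontrivial singular inner function, while the growth of $|\Theta|^{-1}$ is simultaneously sharp enough to be defeated only by the divergence of the square-Dini integral for $w$. A promising concrete scheme is to equidistribute $\{\zeta_k\}$ at dyadic scales along $\T$, select $t_k$ of magnitude controlled by $w$ at the corresponding scale, and extract $\Theta$ as a normal-family limit of the finite products. The technical heart of the proof lies in verifying that for such a $\Theta$, the divergence of $\int_0^1 w^2(t)/t\, dt$ is precisely the obstruction to the existence of a nonzero element of $X \cap K_\Theta$.
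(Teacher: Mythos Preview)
Your duality outline differs substantially from the paper's argument, and its central reduction is incorrect as stated.

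The paper works \emph{directly}, with no duality. It proves a cyclicity criterion (\thref{PROP:SuffCyc}): if $|f(z)| \geq c_1 u(1-|z|)$ and $(1-|z|)|f'(z)| \leq c_2 v(1-|z|)$ with $v/u = o(w)$, then one can show $f/f_r \to 1$ in $\B_0(w)$, hence $f$ is cyclic. It then invokes the Anderson--Fernandez--Shields construction (\thref{MOCMOS}), which supplies a singular probability measure $\mu$ with prescribed modulus of continuity $\alpha$ and modulus of smoothness $\beta$, subject only to $\int_0^1 \beta^2(t)\,dt/t = \infty$. Standard Poisson estimates give $|S_\mu(z)| \gtrsim e^{-c\alpha(1-|z|)}$ and $(1-|z|)|S'_\mu(z)| \lesssim \beta(1-|z|)$, so the criterion applies with $u = e^{-c\alpha}$, $v = \beta$. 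Setting $\beta \asymp w\, e^{-c\alpha}$ and letting $\alpha \to \infty$ slowly enough that $\int_0^1 w^2 e^{-2c\alpha}\, dt/t = \infty$ --- which is possible exactly because $\int_0^1 w^2\, dt/t = \infty$ --- completes the argument. The square-Dini divergence enters only to guarantee that the Anderson--Fernandez--Shields measure exists for the chosen $\beta$.

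Your reduction to ``$X \cap K_\Theta = \{0\}$'' cannot be right. Under the Cauchy pairing the dual $X$ of $\B_0(w)$ is a Bergman-type space that \emph{contains} $H^2$ (already in the classical case $w\equiv 1$ one has $X \cong A^1 \supset H^2$), so $K_\Theta \subset H^2 \subset X$ and hence $X \cap K_\Theta = K_\Theta$, which is nonzero for every nonconstant inner $\Theta$. The genuine annihilator of $\{z^n\Theta : n\geq 0\}$ in $X$ is a backward-shift-invariant subspace of the Bergman-type space $X$, not the Hardy model space; identifying it with $K_\Theta$ conflates the $H^2$ duality with the $\B_0(w)$--$X$ duality. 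Your phrase ``suitable boundary realization of $K_\Theta$'' does not repair this, and the subsequent plan (reproducing kernels of $K_\Theta$, pointwise bounds via $|\Theta|^{-1}$) is aimed at the wrong object. Even setting that aside, the step you yourself call the ``technical heart'' is not attempted: no estimate is offered that ties the atomic masses $t_k$ in your product construction to the divergence of $\int_0^1 w^2/t$, so the proposal remains a strategy sketch rather than a proof.
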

The result is sharp in the sense that \thref{THM:CycinnerBw} fails drastically for majorants $w$ that satisfy the square Dini-condition. In fact, $\B_0(w)$ does not even contain singular inner function, see Lemma 1 in \cite{dyakonov2002weighted}.






\subsection{Simultaneous approximation and removable sets}\label{SEC:SA}
We now consider a one-sided problem on uniqueness sets for Cauchy integrals in $\ell^{\Phi}_a$. Given a compact subset $K \subset \T$, we denote by $M(K)$ the Banach space of complex finite Borel measures on $\T$, which are supported on $K$. Adapting the notions introduced by Khrushchev in \cite{khrushchev1978problem}, we say that a compact set $K \subset \T$ is \emph{removable for the Cauchy transform} in $\ell^{\Phi}_a$, if for any $\nu \in M(K)$, we have
\[
\sum_{n=0}^\infty \Phi \left( \, \abs{\widehat{\nu}(n)} \, \right) < \infty \implies \nu \equiv 0.
\]
 In other words, $K\subset \T$ is removable for the Cauchy transform in $\ell^{\Phi}_a$ if there exists no non-trivial complex finite Borel measure $\nu$ on $K$, whose Cauchy transform $\Ka (d\mu)$ belongs to $\ell^{\Phi}_a$. Removable sets for Cauchy transforms in function spaces determined by various smoothness conditions of H\"older-type and such, were thoroughly investigated by S. Khrushchev in \cite{khrushchev1978problem}. More recently, removable sets for Cauchy transforms in weighted sequence spaces where considered by B. Malman, see \cite{malman2022cyclic}. It is well-known that the problem of removable sets for Cauchy transforms can be rephrased by means of duality, as a problem of simultaneous approximation in the dual space (see Theorem 1.2 in \cite{khrushchev1978problem}). Our next result is essentially a holomorphic version of a Theorem by Katznelson and Kahane in \cite{kahane1971comportement}, see also Lemma 2 in \cite{kahane2000series}. There, it is proved using probabilistic arguments, but we shall give a proof based on a purely deterministic construction.

\begin{thm}\thlabel{THM:CAUUniqlp} 
For any $\Psi$ be a Young function with $\Psi(t)/t^2 \downarrow 0$ as $t\downarrow 0$. Then there exists compact sets $K \subset \T$ of Lebesgue measure arbitrary close to full, with the properties that for any $f\in \ell^{\Psi}_a$ and any continuous function $g$ on $K$, there exist polynomials $(Q_n)_n$ such that 
\[
\lim_n \norm{Q_n -f}_{\Psi} + \sup_{z\in K} \abs{Q_n(z)-g(z)}=0.
\]
\end{thm}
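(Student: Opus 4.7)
The argument proceeds by a Hahn--Banach duality reduction, after which the existence of an appropriate $K$ is extracted from the cyclic inner function furnished by \thref{THM:CycinnerlPsi}.

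\emph{Step 1 (Hahn--Banach duality).} The density of the diagonal image $\{(Q, Q|_K) : Q \text{ a polynomial}\}$ in $\ell^\Psi_a \oplus C(K)$ is equivalent, by a standard argument, to the following uniqueness statement: if $\nu \in M(K)$ satisfies $(\hat\nu(-n))_{n \geq 0} \in \ell^{\Psi^*}$, then $\nu \equiv 0$. Here $\Psi^*$ denotes the Young function complementary to $\Psi$; the assumption $\Psi(t)/t^2 \downarrow 0$ corresponds to $\Psi^*(t)/t^2 \uparrow \infty$ as $t \downarrow 0$, placing the dual space $\ell^{\Psi^*}_a$ within the framework of \thref{THM:UniqNCT}.

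\emph{Step 2 (Choice of $K$ and $\Theta$).} Apply \thref{THM:CycinnerlPsi} to produce a singular inner function $\Theta$, with singular measure $\sigma$, such that $\{z^n \Theta : n \geq 0\}$ has a dense linear span in $\ell^\Psi_a$. Since $\sigma$ is mutually singular with respect to Lebesgue measure, $\operatorname{supp}\sigma$ has Lebesgue measure zero, and one may select a compact $K \subset \T \setminus \operatorname{supp}\sigma$ with $|K| > 2\pi - \varepsilon$. By symmetrizing the singular measure (passing from $\Theta$ to $\Theta \cdot \Theta^\#$ if necessary, where $\Theta^\#(z) := \overline{\Theta(\bar z)}$) we may also assume $K$ is closed under complex conjugation. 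The boundary values of $\Theta$ then extend continuously to $K$ with $|\Theta| = 1$ there.

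\emph{Step 3 (Reduction to analyticity of $\Theta \, d\nu$).} Given $\nu \in M(K)$ with $(\hat\nu(-n))_{n \geq 0} \in \ell^{\Psi^*}$, define the continuous linear functional
\[
L_\nu(f) := \sum_{n \geq 0} \hat f(n)\, \hat\nu(-n), \qquad f \in \ell^\Psi_a.
\]
A direct Fourier expansion gives $L_\nu(z^m \Theta) = \int_K \zeta^m \Theta(\zeta) \, d\nu(\zeta)$ for every $m \geq 0$. By the cyclicity of $\Theta$ in $\ell^\Psi_a$ from Step~2, $L_\nu$ vanishes identically if and only if all these integrals vanish, i.e., if and only if the measure $\Theta \, d\nu \in M(K)$ is analytic in the sense of F.\ and M.\ Riesz. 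Once $L_\nu \equiv 0$ is established, we read off $\hat\nu(-n) = L_\nu(z^n) = 0$ for every $n \geq 0$; then by the F.\ and M.\ Riesz theorem $\nu$ is absolutely continuous with density in $zH^1$, and since $\nu$ is supported in $K \subsetneq \T$, inner-outer factorization for $H^1$ forces this density to vanish identically, whence $\nu \equiv 0$.

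\emph{Step 4 (Main obstacle: analyticity of $\Theta \, d\nu$).} The crux is verifying that $\int_K \zeta^m \Theta \, d\nu = 0$ for all $m \geq 0$. Writing this as $\sum_{k \geq 0} \hat\Theta(k)\, \hat\nu(-m-k)$ and using that $\hat\Theta \in \ell^\Psi$ (because $\Theta \in \ell^\Psi_a$ by \thref{THM:CycinnerlPsi}) together with the $\ell^\Psi$--$\ell^{\Psi^*}$ duality, the plan is to realize each integral as a limit of contour integrals
\[
\int_K \zeta^m \Theta \, d\nu \;=\; \lim_{r \uparrow 1} \oint_{|z| = r} z^m \Theta(z) \Ka(d\nu)(z)\, \frac{dz}{2\pi i z},
\]
and then deform the contour into the exterior of $\bar\D$. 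This deformation is legitimate because $\nu \in M(K)$ and $K \cap \operatorname{supp}\sigma = \emptyset$, so $\Ka(d\nu)$ is holomorphic across $\operatorname{supp}\sigma$, while the exterior expansion of $\Ka(d\nu)$ inherits $\ell^{\Psi^*}$-summable coefficients from the hypothesis on $\nu$. The hardest part of the proof is making this deformation rigorous and showing the integrals collapse to zero, delicately using that $|\Theta(r\zeta)| \to 0$ radially on $\operatorname{supp}\sigma$ and that $\Ka(d\nu)$ is bounded in a neighborhood of $\operatorname{supp}\sigma$; this is the deterministic analogue of the random-sign arguments of \cite{kahane2000series, kahane1971comportement}.
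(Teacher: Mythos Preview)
Your approach diverges substantially from the paper's, and it contains a genuine gap.

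\textbf{What the paper does.} The paper does \emph{not} pass through \thref{THM:CycinnerlPsi} at all. Instead it invokes \thref{THM:Bwlp} to produce a majorant $w$ violating the square Dini-condition with $\B_0(w)\hookrightarrow \ell^{\Psi}_a$ continuously, and then quotes an external simultaneous-approximation theorem for $\B_0(w)$ (Theorem~1 in \cite{limani2024asymptotic}) to obtain the compact set $K$ and the approximating polynomials directly in the Bloch norm; the embedding transfers the conclusion to $\ell^{\Psi}_a$. In the paper's logical structure, \thref{THM:CAUUniqlp} and \thref{THM:CycinnerlPsi} are parallel corollaries of the Bloch embedding, not one derived from the other.

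\textbf{The gap in your Step 2.} You choose $K\subset \T\setminus\operatorname{supp}\sigma$ and assert that $\abs{K}$ can be taken arbitrarily close to full because ``$\operatorname{supp}\sigma$ has Lebesgue measure zero''. This conflates the Borel carrier of a singular measure with its \emph{closed} support. A singular measure can have closed support equal to all of $\T$ (and the Anderson--Fernandez--Shields construction underlying \thref{THM:CycinnerBw} gives no control on the closed support; Riesz-product type measures typically have full support). Nothing in \thref{THM:CycinnerlPsi} lets you assume $m(\operatorname{supp}\sigma)=0$, so you cannot guarantee that $\T\setminus\operatorname{supp}\sigma$ contains a compact set of measure close to $1$. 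Without $K\cap\operatorname{supp}\sigma=\emptyset$ your Step~3 identity $L_\nu(z^m\Theta)=\int_K \zeta^m\Theta\,d\nu$ and the contour argument of Step~4 both break down.

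\textbf{The gap in your Step 4.} Even granting Step~2, Step~4 is not a proof but a hope. The proposed deformation ``into the exterior of $\overline{\D}$'' requires an analytic continuation of $z^m\Theta(z)\,\Ka(d\nu)(z)$ across $\T$, control of the Schwarz-reflected $\Theta$ (which has modulus exceeding $1$ and poles in $\abs{z}>1$), and a vanishing mechanism at infinity. None of these are supplied, and there is no reason the integrals $\int_K \zeta^m\Theta\,d\nu$ should vanish from the hypothesis $(\widehat{\nu}(-n))_{n\ge0}\in\ell^{\Psi^*}$ alone: cyclicity of $\Theta$ tells you these integrals \emph{detect} whether $L_\nu\equiv 0$, but it does not make them zero. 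The argument as written is circular at this point.
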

%

The above theorem implies that holomorphic functions with Fourier coefficients in Orlicz spaces $\ell_a^{\Psi}$ with $\Psi(t)/t^2 \downarrow 0$ as $t\downarrow 0$ may exhibit very erratic boundary behavior. In particular, the Carleson-Hunt Theorem drastically fails for $\ell^{\Psi}_a$. We will occasionally refer to sets satisfying the hypothesis of \thref{THM:CAUUniqlp} as sets satisfying the SA-property wrt $\ell^{\Psi}_a$. Next, we provide an immediate application of our results to Menshov universality. We denote $N$-th Taylor polynomial of an element $f \in \ell^p_a$ by
\[
T_N(f)(\zeta) = \sum_{n=0}^N \widehat{f}(n)\zeta^n, \qquad \zeta \in \T,
\]
regarded as a linear map from $\ell^p_a$ into the Banach space of continuous functions $C(K)$, where $K\subset \T$ is a compact set. A pair $(f,K)$ is said to induce a Menshov-universality on $\ell^p_a$ if for any measurable function $g$ on $K$ and any $f\in \ell^p_a$, there exists holomorphic polynomials $(Q_n)_n$ such that $Q_n$ converges to $f$ in $\ell^p_a$ and $Q_n$ converges to $g$ pointwise $dm$-a.e on $K$. As an immediate corollary of our \thref{THM:CAUUniqlp} on simultaneous approximation, we obtain the main result of Kahane and Nestoridis as Corollary \cite{kahane2000series}.

\begin{cor}[Kahane, Nestoridis, \cite{kahane2000series}] Let $p>2$. Then there exists nowhere dense compact sets $K \subset \T$ of Lebesgue measure arbitrary close to full, and a generic subset $\mathcal{U}_p \subset \ell^p_a$ in the sense of Baire categories, such that any function $f \in \mathcal{U}_p$ induces a Menshov-universality pair $(f,K)$ for $\ell^p_a$.
    
\end{cor}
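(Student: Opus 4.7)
The plan is to combine Theorem \thref{THM:CAUUniqlp} with a standard Baire-category argument. Since $p>2$, the Young function $\Psi(t)=t^p$ satisfies $\Psi(t)/t^2 = t^{p-2}\downarrow 0$ as $t\downarrow 0$, so Theorem \thref{THM:CAUUniqlp} produces a compact set $K\subset\T$ of Lebesgue measure arbitrarily close to $2\pi$ enjoying the SA-property with respect to $\ell^p_a$; by removing from $\T\setminus K$ a closed nowhere-dense subset of arbitrarily small measure (or by inspection of the construction behind Theorem \thref{THM:CAUUniqlp}) we may further arrange that $K$ itself is nowhere dense. Fix such a $K$ for the remainder of the argument.

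Let $\{h_k\}_{k\geq 1}$ be a countable dense subset of $C(K)$, which exists as $K$ is a compact metric space. For each pair of positive integers $k,n$, define
\[
V_{k,n} = \Big\{ f\in\ell^p_a : \exists \text{ polynomial } Q \text{ with } \norm{Q-f}_p < \tfrac{1}{n} \text{ and } \sup_{z\in K}\abs{Q(z)-h_k(z)} < \tfrac{1}{n} \Big\}.
\]
Each $V_{k,n}$ is open in $\ell^p_a$, since any witnessing polynomial $Q$ for $f\in V_{k,n}$ continues to witness membership of all $f'$ sufficiently close to $f$. Each $V_{k,n}$ is also dense: given any $f_0\in\ell^p_a$, Theorem \thref{THM:CAUUniqlp} applied to $f_0$ and $g:=h_k$ furnishes polynomials $Q_m$ with $\norm{Q_m-f_0}_p + \sup_{z\in K}\abs{Q_m(z)-h_k(z)}\to 0$, and for $m$ large $Q_m$ itself lies in $V_{k,n}$ and is arbitrarily close to $f_0$. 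Baire's theorem now guarantees that $\mathcal{U}_p := \bigcap_{k,n\geq 1} V_{k,n}$ is a dense $G_\delta$ subset of $\ell^p_a$, hence generic.

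To verify that every $f\in\mathcal{U}_p$ induces a Menshov-universality pair $(f,K)$, fix a measurable function $g$ on $K$. For each $n$, Lusin's theorem produces $\widetilde{h}_n\in C(K)$ agreeing with $g$ outside a subset of $K$ of $dm$-measure less than $2^{-n-1}$; select $h_{k(n)}$ from our dense family with $\sup_{z\in K}\abs{h_{k(n)}(z) - \widetilde{h}_n(z)} < 1/n$. Since $f\in V_{k(n),n}$, there exists a polynomial $Q_n$ with $\norm{Q_n-f}_p < 1/n$ and $\sup_{z\in K}\abs{Q_n(z)-h_{k(n)}(z)} < 1/n$. Two applications of the triangle inequality show that the bad set $\{z\in K : \abs{Q_n(z)-g(z)} \geq 2/n\}$ is contained in $\{z\in K : \widetilde{h}_n(z)\neq g(z)\}$, hence has $dm$-measure less than $2^{-n-1}$. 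A Borel-Cantelli argument then yields $Q_n(z)\to g(z)$ for $dm$-a.e.\ $z\in K$, while simultaneously $\norm{Q_n-f}_p \to 0$.

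No step here poses a serious obstacle once Theorem \thref{THM:CAUUniqlp} is at our disposal; the entire analytic content is concentrated in that simultaneous approximation result. The remainder is a routine Baire-category packaging combined with the standard measure-theoretic upgrade from continuous targets $h_k$ to arbitrary measurable targets $g$ via Lusin and Borel-Cantelli. The only minor points requiring care are ensuring that $K$ can be taken nowhere dense and that the family $\{h_k\}$ is genuinely dense in $C(K)$, both of which are easily arranged.
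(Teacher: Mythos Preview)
Your proposal is correct and matches the paper's approach: the paper simply declares the result ``an immediate corollary'' of \thref{THM:CAUUniqlp} without writing out a proof, and the Baire-category packaging you supply (open dense sets $V_{k,n}$, then Lusin plus Borel--Cantelli to pass from continuous to measurable targets) is precisely the standard way to fill this in. One small slip in your write-up: to arrange that $K$ is nowhere dense you want to intersect $K$ with a nowhere dense compact set of nearly full measure (equivalently, remove from $K$ a small dense open set), not ``remove from $\T\setminus K$ a closed nowhere-dense subset''; the SA-property passes to compact subsets by Tietze extension, so this causes no difficulty.
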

The new feature of this result its proof is non-probabilistic an relies on the authors former recent work on simultaneous approximation in $\B_0(w)$, which revolves around a special class of inner functions. See \cite{limani2024asymptotic}.

\subsection{Uniformly admissible majorants}
Here, we collect some consequence of our developments to problems of Beurling-Mallavian type in the unit disc. Recall that the classical Beurling-Mallavian multiplier Theorem (for instance, see \cite{mashreghi2006beurling} and references therein) asserts that if $w$ is a uniformly continuous function on the real line $\mathbb{R}$, then there exists a non-trivial function $f\in L^2(\mathbb{R})$ with Fourier transform supported on an interval, and such that 
\[
\abs{f(x)} \leq w(x), \qquad x \in \mathbb{R}
\]
holds, if and only if the size of $w$ is not too small: 
\[
\int_{\mathbb{R}} \frac{\log w(t)}{1+t^2} dt > -\infty
\]
In complex analytic terms, the theorem asserts that $w$ (with some mild regularity assumptions) is an admissible majorant for all holomorphic Paley-Wiener classes in the upper-half plane if and only if $\log w$ is Poisson-summable. Regarding the Paley-Wiener classes as particular subclass of model spaces, one may consider the problem of describing the class of admissible majorants in broader contexts. This was indeed done in \cite{havin2003admissible, havin2003admissible2} by V. Havin and J. Mashreghi, and by K. Dyakonov in the unit disc \cite{dyakonov2002zero}. In our context, we offer the following result on the lack of a uniform admissible majorant for Fourier coefficients of functions in de Branges-Rovnyak spaces.

\begin{thm}\thlabel{THM:BM1type} Let $w_n \downarrow 0$ be a decreasing sequence of positive real numbers. Then there exists a holomorphic self-map $b$ on $\D$, such that for any $f \in \hil(b)$, we have
\[
\abs{f_n} \leq w_n \qquad n=0,1,2,\dots \implies f\equiv 0.
\]
\end{thm}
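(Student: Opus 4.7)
The plan is to reduce \thref{THM:BM1type} to \thref{cor:BM1type} by producing, for the given sequence $(w_n)$, a Young function $\Phi$ with $\lim_{x\to 0+}\Phi(x)/x^2 = \infty$ such that the admissibility class
\[
W \;=\; \big\{f \in H^2 : |\widehat{f}(n)| \leq w_n \text{ for all } n \geq 0\big\}
\]
is contained in the Orlicz space $\ell^\Phi_a$. Granting this, \thref{cor:BM1type} supplies a holomorphic self-map $b \colon \D \to \D$ with $\hil(b) \cap \ell^\Phi_a = \{0\}$, whence $\hil(b) \cap W = \{0\}$, which is the desired conclusion.

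The construction of $\Phi$ proceeds by first choosing a positive sequence $\alpha_n \nearrow \infty$ with $\sum_n w_n^2 \alpha_n < \infty$ via the standard telescoping device $\alpha_n = 1/\sqrt{s_n}$, where $s_n = \sum_{k \geq n} w_k^2$. One then sets $\Phi(w_n) := w_n^2 \alpha_n$ and extends $\Phi$ by piecewise affine interpolation between consecutive vertices $(w_{n+1}, \Phi(w_{n+1}))$ and $(w_n, \Phi(w_n))$, with a final segment joining the origin to the first vertex. A routine check confirms that $\Phi$ is a genuine Young function with $\Phi(x)/x^2 \to \infty$ as $x \to 0+$ and the requisite doubling condition. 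Since $\Phi$ is non-decreasing, the pointwise bound $|\widehat{f}(n)| \leq w_n$ forces
\[
\sum_n \Phi\big(|\widehat{f}(n)|\big) \;\leq\; \sum_n \Phi(w_n) \;=\; \sum_n w_n^2 \alpha_n \;<\; \infty
\]
for every $f \in W$, proving $W \subset \ell^\Phi_a$.

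The main obstacle lies in achieving the construction in full generality: the telescoping trick above presupposes that $(w_n) \in \ell^2$ in order that the tails $s_n$ be finite, which is not automatic from the bare hypothesis $w_n \downarrow 0$. When $(w_n)$ decays too slowly -- for instance $w_n = 1/\log n$ -- one must additionally exploit the intrinsic $\ell^2$-summability of the Fourier coefficients of any $f \in \hil(b) \subset H^2$ alongside the box constraint $|\widehat{f}(n)| \leq w_n$, replacing the naive tail summation by a block-wise construction of $\Phi$ that interpolates between the two restrictions on dyadic blocks of indices where $(w_n)$ is effectively $\ell^2$-controllable. This refinement produces the required Young function $\Phi$ and completes the reduction to \thref{cor:BM1type}.
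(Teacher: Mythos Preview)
Your reduction to \thref{cor:BM1type} is a reasonable strategy when $(w_n)\in\ell^2$: the telescoping choice $\alpha_n=1/\sqrt{s_n}$ does give $\sum_n w_n^2\alpha_n<\infty$, and after some additional work (the piecewise linear interpolant is \emph{not} automatically convex, so one must pass to a convex minorant or choose the $\alpha_n$ more carefully, and then check doubling) one can produce a Young function $\Phi$ with $\Phi(x)/x^2\to\infty$ and $W\subset\ell^\Phi_a$. In that regime your route and the paper's are genuinely different: the paper never embeds $W$ into an Orlicz space but instead constructs directly a compact $K\subset\T$ of nearly full measure on which the simultaneous-approximation property holds for the weighted space $\ell^1_a(w)$ (\thref{PROP:SAl1w} and \thref{THM:SAell1w}), and then applies the de Branges--Rovnyak mechanism of \thref{PROP:dbrX'} to the Cauchy-dual pair $\bigl(\ell^1_a(w),\ \ell^\infty_a(1/w)\bigr)$, the second space being precisely the majorant class $W$.

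The genuine gap is the case $(w_n)\notin\ell^2$. Your proposed ``block-wise refinement'' cannot succeed, because for such $(w_n)$ there is \emph{no} Young function $\Phi$ with $\Phi(x)/x^2\to\infty$ for which $W\cap H^2\subset\ell^\Phi_a$. Indeed, fix any such $\Phi$ and choose $\varepsilon_j\downarrow 0$ with $\Phi(x)\geq jx^2$ for $0<x\leq\varepsilon_j$. Since $w_n\downarrow 0$ and $\sum_n w_n^2=\infty$, one can pick $R_1<R_2<\cdots$ with $w_n\leq\varepsilon_j$ for all $n\geq R_j$ and $T_j:=\sum_{n=R_j}^{R_{j+1}-1}w_n^2\geq 1$. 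Set $a_n=w_n/(j\sqrt{T_j})$ for $R_j\leq n<R_{j+1}$ and $a_n=0$ for $n<R_1$. Then $|a_n|\leq w_n$, and $\sum_n a_n^2=\sum_j 1/j^2<\infty$, yet
\[
\sum_n\Phi(|a_n|)\ \geq\ \sum_{j\geq 1} j\sum_{n=R_j}^{R_{j+1}-1}a_n^2\ =\ \sum_{j\geq 1}\frac{1}{j}\ =\ \infty.
\]
Hence $f(z)=\sum_n a_n z^n$ lies in $W\cap H^2$ but not in $\ell^\Phi_a$, so the containment you need fails for every admissible $\Phi$, and the reduction to \thref{cor:BM1type} is impossible in this regime. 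The paper's argument through $\ell^1_a(w)$ sidesteps this obstruction entirely, as it never attempts to squeeze the majorant class into an Orlicz space.
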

 In other words, there cannot be a uniform quantitative threshold on the decay of Fourier coefficients among all de Branges-Rovnyak spaces. This should be contrasted with \thref{THM:AlekUa}, which asserts that all de Branges-Rovnyak spaces enjoy the improved uniform qualitative property of containing a plenitude of functions with uniformly convergent Fourier series in $\T$. Again, the holomorphic self-map $b$ appearing the statement will be an extreme point in the unit-sphere of $H^\infty$, and also chosen to be an outer function. A principal partial step towards proving \thref{THM:BM1type} will be the following problem on uniqueness sets for Cauchy integrals, which may be interesting in its own right.

\begin{thm}\thlabel{THM:SAell1w} Let $(w_n)_n$ be a decreasing sequence of real numbers with $w_n \to 0$. Then there exists compact sets $K\subset \T$ of Lebesgue measure arbitrary close to full, such that whenever $\mu \in M(K)$ with 
\[
\sup \left\{ \frac{\abs{\widehat{\mu}(n)}}{w_n}: n=0,1,2, \dots \right\} < \infty \implies \mu \equiv 0.
\]
    
\end{thm}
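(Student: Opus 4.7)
The plan is to reformulate the uniqueness statement via Banach space duality as a simultaneous approximation theorem, and then prove the latter by a deterministic construction of polynomial approximants, parallel in spirit to the proof of \thref{THM:CAUUniqlp} but executed in a weighted coefficient space rather than an Orlicz space.

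Introduce the Banach space $A_w$ of holomorphic functions on $\D$ normed by $\norm{f}_{A_w} := \sum_{n \geq 0} \abs{\widehat{f}(n)}\, w_n$, in which holomorphic polynomials are dense. Under the Fourier pairing $\langle P, \mu \rangle := \sum_n \widehat{P}(n)\widehat{\mu}(n)$, the Banach dual of $A_w$ is identified with the weighted $\ell^\infty$-space $\{(\xi_n) : \sup_n |\xi_n|/w_n < \infty\}$, and any continuous linear functional on $A_w \oplus C(K)$ vanishing on the diagonal copy of polynomials must take the form $(f,g) \mapsto \sum_n \widehat{f}(n)\xi_n + \int_K g\, d\nu$ with $\xi_n = -\widehat{\nu}(n)$ and $\nu \in M(K)$. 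Thus \thref{THM:SAell1w} is equivalent to proving that, for a suitable compact $K \subset \T$ of Lebesgue measure close to $2\pi$, the holomorphic polynomials are dense in $A_w \oplus C(K)$ under the sum-norm.

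Since any enlargement $\tilde{w}_n \geq w_n$ of the sequence only strengthens the uniqueness conclusion, I replace $(w_n)$ by $\tilde{w}_n := w_n + (\log(n+2))^{-1/2}$ to ensure that the corresponding majorant $\omega(t) := \tilde{w}_{\lfloor 1/t \rfloor}$ (mildly smoothed) obeys $\omega(t)/t^\gamma$ non-decreasing for some $0 < \gamma < 1$ and violates the square Dini condition. \thref{THM:CycinnerBw} then supplies a singular inner function $\Theta$ that is cyclic in $\B_0(\omega)$, with singular measure that we may arrange to sit on a closed set $E \subset \T$ of arbitrarily small Lebesgue measure. Choose $K \subset \T$ compact of Lebesgue measure close to full, separated from $E$. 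Given $f \in A_w$ and $g \in C(K)$, first approximate $f$ in $A_w$ by a holomorphic polynomial $f^*$; since $\Theta$ is continuous and unimodular on $K$, one has $\Theta^{-1}(g - f^*) \in C(K)$, and the cyclicity of $\Theta$, combined with the deterministic simultaneous approximation machinery from \cite{limani2024asymptotic}, yields polynomial multipliers $P_n$ for which $P_n\Theta$ approximates $g - f^*$ uniformly on $K$ while the $\B_0(\omega)$-norm of $P_n\Theta$ tends to zero. Taylor truncating $f^* + P_n\Theta$ at a sufficiently high order produces the desired polynomial $Q_n$.

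The principal obstacle is transferring $\B_0(\omega)$-smallness of $P_n\Theta$ into $A_w$-smallness. The pointwise coefficient embedding $\B_0(\omega) \hookrightarrow \{|\widehat{h}(k)| \lesssim \tilde{w}_k\}$ gives only a weighted $\ell^\infty$-bound on Taylor coefficients, whereas $A_w$ demands weighted $\ell^1$-summability; the two norms are in general incomparable. Closing this gap relies crucially on the sparsity structure of the Taylor coefficients of the specific $\Theta$ constructed in \cite{limani2024asymptotic} — the non-zero coefficients concentrate on lacunary subsequences, so there are only $O(1)$ significant contributions per dyadic block and the $\ell^\infty$- and $\ell^1$-norms are effectively comparable on the polynomial multiples of $\Theta$ that appear. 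Calibrating $f^*$ and $P_n$ so that the $A_w$- and $C(K)$-errors vanish simultaneously is the main balancing act. Once the simultaneous approximation is established, pairing with any $\mu \in M(K)$ satisfying $\sup_n \abs{\widehat{\mu}(n)}/w_n < \infty$ yields $\int g\, d\mu = \lim_n \int Q_n\, d\mu = 0$ for every $g \in C(K)$, forcing $\mu \equiv 0$.
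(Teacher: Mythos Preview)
Your duality reformulation and the final pairing argument are fine, but the proof has a genuine gap precisely where you flag it: the passage from $\B_0(\omega)$-smallness to $A_w$-smallness. The coefficient bound $|\widehat{h}(k)| \lesssim \tilde{w}_k$ for $h \in \B_0(\omega)$ gives only $\sum_k |\widehat{h}(k)| w_k \lesssim \sum_k \tilde{w}_k w_k$, and since $\tilde{w}_k \geq (\log(k+2))^{-1/2}$ this series can diverge for slowly decaying $w_k$; there is no embedding $\B_0(\omega) \hookrightarrow \ell^1_a(w)$ in general. Your proposed fix via ``sparsity of the Taylor coefficients of the specific $\Theta$'' does not work: even if $\Theta$ itself had lacunary Fourier support, the polynomial multiples $P_n\Theta$ do not --- convolution in Fourier space spreads the support over full dyadic blocks, and the $\ell^1$/$\ell^\infty$ comparability is lost. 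No property of the construction in \cite{limani2024asymptotic} delivers what you need here. (A secondary issue: the singular measure produced by \thref{MOCMOS}/\thref{THM:CycinnerBw} is not obviously supported on a closed set of small Lebesgue measure, so the separation of $K$ from the support of $\Theta$ also needs justification.)

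The paper avoids the Bloch/inner-function machinery entirely and proceeds by a much more elementary device. One builds a smooth outer function $G_{\gamma,\delta}$ with $|G_{\gamma,\delta}|=\gamma$ on a fixed arc $A_\delta$ of length $1-\delta$ and $G_{\gamma,\delta}(0)=1$, sets $F_{\gamma,\delta}=1-G_{\gamma,\delta}$, and then \emph{composes with monomials}: $f_n(z)=F_{\gamma_n,\delta_n}(z^{m_n})$. The preimage $E_{n,\delta_n}$ of $A_{\delta_n}$ under $z\mapsto z^{m_n}$ still has measure $1-\delta_n$, so $K=\bigcap_n E_{n,\delta_n}$ has measure $\geq 1-\sum_n\delta_n$. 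The crucial point is that the Fourier coefficients of $f_n$ are supported on the multiples of $m_n$, so by monotonicity of $(w_k)$,
\[
\sum_{k\geq 0}|\widehat{f_n}(k)|\,w_k=\sum_{k\geq 1}|\widehat{F}_{\gamma_n,\delta_n}(k)|\,w_{m_nk}\leq w_{m_n}\,\Bigl(\sum_{k\geq 1}k^{-2}\Bigr)^{1/2}\norm{F'_{\gamma_n,\delta_n}}_{H^2},
\]
and one simply chooses $m_n$ large enough that $w_{m_n}\norm{F'_{\gamma_n,\delta_n}}_{H^2}\to 0$. This yields polynomials $Q_n$ with $Q_n\to 1$ uniformly on $K$ and $\norm{Q_n}_{\ell^1(w)}\to 0$; pairing $\conj{\zeta^m Q_n}$ against $\mu$ then kills $\widehat{\mu}(m)$ for every $m\geq 0$, and F.\ and M.\ Riesz plus $m(K)<1$ finish. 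The lacunarity you were hoping to extract from $\Theta$ is here obtained for free, and in a controllable way, from composition with $z^{m_n}$.
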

Roughly speaking, for any sequence $w_n \downarrow 0$, there exists sets of positive Lebesgue measure which cannot support measures whose positive Fourier coefficients are majorized of $w_n$.

\subsection{Organization and notation}
The paper is organized as follows: In section \ref{SEC3}, we collections some dual reformulations of our problems and clarify the connection between uniqueness sets for normalized Cauchy transforms to problems of cyclicity and simultaneous approximation. In Section \ref{SEC4}, we prove \thref{THM:CycinnerBw} on cyclic inner functions in $\B_0(w)$, while Section \ref{SEC5} is devoted to establishing an embedding of the Bloch spaces into Orlicz spaces. The proofs of our main results have been gathered in \ref{SEC6}, where most of them are derived as corollaries of our developments in the preceding sections. 

For two positive numbers $A, B >0$, we will frequently use the notation $A \lesssim B$ to mean that $A \leq cB$ for some positive constant $c>0$. If both $A\lesssim B$ and $B \lesssim A$ hold, we will write $A\asymp B$. Additionally, absolute constants will generally be denoted by $C$, even though the value of $C$ may vary from line to line.


\section{Cauchy dual reformulations}\label{SEC3}
\subsection{Orlicz spaces and duality}
Given a Young function $\Phi$, we define its convex conjugate (Legendre transform) as 
\[
\Phi^*(x) := \inf_{y>0} \left( xy - \Phi(y) \right), \qquad x>0.
\]
It is well-known that $\Phi^*$ is also a Young function and that $(\Phi^*)^* = \Phi$. The notion of convex conjugate gives rise to the following Young-type inequality
\begin{equation}\label{EQ:PhiLeg}
xy \leq \Phi(x) + \Phi^*(y), \qquad x,y>0,
\end{equation}
Hence one can deduce the H\"older-type inequality for the Orlicz spaces $\ell^{\Phi}_a$:
\[
\lim_{r\to 1-} \abs{\int_{\T}f(r\zeta) \conj{g(r\zeta)} dm(\zeta)} \leq 2 \norm{f}_{\ell^{\Phi}_a} \norm{g}_{\ell^{\Phi^*}_a}
\]
Furthermore, this gives rise to the Cauchy-dual relation $\left(\ell^{\Phi}_a\right)' \cong \ell^{\Phi^*}_a$. We refer the reader to \cite{lindenstrauss2013classical} for a detailed treatment of these matters. The following elementary will be useful for our purposes.
\begin{lemma}\thlabel{LEM:Conconj}
If $\Phi$ is a Young function with the property $\Phi(x)/x^2  \uparrow \infty$ as $x \downarrow 0$, then its convex conjugate satisfies $\Phi^*(x)/x^2 \downarrow 0$ as $x \downarrow 0$.
    
\end{lemma}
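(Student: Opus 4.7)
The plan is to work directly from the Fenchel--Legendre formula $\Phi^*(x) = \sup_{y > 0}\bigl(xy - \Phi(y)\bigr)$, which is the standard convex-conjugate definition consistent with Young's inequality \eqref{EQ:PhiLeg}. The heuristic is that if $\Phi$ dwarfs $y^2$ near the origin then, dually, $\Phi^*$ must be dwarfed by $x^2$ there, since the slope at which $y \mapsto xy - \Phi(y)$ is maximised in $y$ corresponds to $y$ very small when $x$ is very small.

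Fix $\epsilon > 0$. By the hypothesis on $\Phi$, I would choose $\delta > 0$ such that $\Phi(y) \geq y^2/\epsilon$ for every $y \in (0, \delta]$. Splitting the supremum at $y = \delta$, the elementary quadratic bound
\[
\sup_{0 < y \leq \delta}\bigl(xy - \Phi(y)\bigr) \;\leq\; \sup_{y > 0}\bigl(xy - y^2/\epsilon\bigr) \;=\; \frac{\epsilon x^2}{4}
\]
handles the inner range, while on $y > \delta$ the convexity of $\Phi$ together with $\Phi(0) = 0$ makes $\Phi(y)/y$ non-decreasing, so $\Phi(y) \geq (\Phi(\delta)/\delta)\, y$ and hence $xy - \Phi(y) \leq 0$ whenever $x \leq \Phi(\delta)/\delta$. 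Combining the two estimates yields $\Phi^*(x)/x^2 \leq \epsilon/4$ for all sufficiently small $x > 0$, and since $\epsilon$ was arbitrary, $\Phi^*(x)/x^2 \to 0$ as $x \downarrow 0$.

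For the monotonicity expressed by the arrow ``$\downarrow$'', I would set $h(y) := \Phi(y)/y^2$, which by hypothesis is non-increasing on some right-neighbourhood $(0, \delta_0]$ of the origin. The substitution $y = xu$ in the defining supremum gives the identity
\[
\frac{\Phi^*(x)}{x^2} \;=\; \sup_{u > 0}\bigl(u - u^2\, h(xu)\bigr),
\]
and for each fixed $u$ the integrand is non-decreasing in $x$ whenever $xu$ stays in the monotonicity range of $h$. The estimate from the previous paragraph confirms that for $x$ small the supremum is effectively attained in the range $xu \leq \delta \leq \delta_0$, so $x \mapsto \Phi^*(x)/x^2$ is non-decreasing near the origin, delivering the ``$\downarrow$'' conclusion. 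The one mild obstacle in this scheme is the bookkeeping of where the supremum is effectively attained, used to justify confining $u$ to the monotonicity range of $h$; the remaining content is elementary convex duality.
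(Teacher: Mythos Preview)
Your argument for $\Phi^*(x)/x^2\to 0$ is correct and follows the same route as the paper's: both hinge on the bound $\Phi(y)\ge y^2/\varepsilon$ for $0<y\le\delta$. The paper simply tests the single value $y=\varepsilon x/2$ and reads off $\Phi^*(x)\lesssim\varepsilon x^2$ via the inequality $\Phi^*(x)\le xy-\Phi(y)$, which is literal under the $\inf$ written in the paper's definition of the conjugate; working with the standard $\sup$ formulation, you rightly add the tail estimate over $y>\delta$ through the convexity bound $\Phi(y)/y\ge\Phi(\delta)/\delta$, a step the paper omits. You also go beyond the paper by treating the monotone half of the ``$\downarrow$'', which the paper's proof does not address at all. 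Your substitution $y=xu$, giving $\Phi^*(x)/x^2=\sup_{u>0}\bigl(u-u^2 h(xu)\bigr)$, makes this immediate once the hypothesis is read as $h(y)=\Phi(y)/y^2$ non-increasing globally (then $u-u^2h(xu)$ is non-decreasing in $x$ for every fixed $u$, and no bookkeeping is needed); if the monotonicity of $h$ is asserted only near $0$, the tail estimate from your first paragraph already confines the effective supremum to $y\le\delta_0$, and the bookkeeping you flag is indeed routine.
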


\begin{proof} 
Fix $0<\varepsilon<1$ and pick $\delta>0$ such that $\Phi(y)\geq y^2 /\varepsilon$ whenever $0<y<\delta$. It follows from the definition of convex conjugate that
\[
\Phi^{*}(x) \leq xy - \Phi(y) \leq y(x-y/\varepsilon), \qquad 0< y <\delta, \qquad x>0.
\]
Now if we confine $0<x< \delta$, then we may choose $y= x\varepsilon/2$, and thus $\Phi^{*}(x) \leq \varepsilon x^2$.   
\end{proof}

\subsection{The role of de Branges-Rovnyak spaces}
In this subsection, we let $\Phi$ denote a Young function satisfying $\lim_{x\to 0+} \Phi(x)/x^2 = +\infty$. Then \thref{LEM:Conconj} ensures that its convex conjugate satisfies $\lim_{x \to 0+} \Phi^*(x)/x^2 =0$, which at its turn implies the chain of containment 
\[
\ell^{\Phi}_a \subset H^2 \subset \ell^{\Phi^*}_a.
\]
Here, we shall make an intimate connection between simultaneous approximation to the theory of de Branges-Rovnyak spaces. To this end, we shall need the following key observation due to S. Khrushchev, phrased in our specific context.

\begin{thm}[Khrushchev's lemma, Theorem 1.2 in \cite{khrushchev1978problem}] A compact set $K \subset \T$ satisfies the SA-property wrt $\ell^{\Phi^*}_a$ if and only if $K$ is removable for the Cauchy transform in $\ell_a^{\Phi}$.
    
\end{thm}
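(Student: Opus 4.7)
The proof proceeds via a direct Hahn-Banach duality argument, in the spirit of Khrushchev's original lemma. The plan is to work in the Banach space $\mathcal{X} = \ell^{\Phi^*}_a \oplus C(K)$ endowed with the natural product norm, and to consider the diagonal subspace $\Delta = \{(Q, Q|_K) : Q \text{ a holomorphic polynomial}\}$. By unwinding the definition introduced in \thref{THM:CAUUniqlp}, the SA-property of $K$ with respect to $\ell^{\Phi^*}_a$ is equivalent to $\Delta$ being dense in $\mathcal{X}$. By Hahn-Banach, this density is in turn equivalent to the triviality of the annihilator $\Delta^{\perp} \subset \mathcal{X}^{*}$.

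The next step is to identify $\mathcal{X}^{*}$ concretely. The Riesz representation theorem provides $C(K)^{*} \cong M(K)$, while the Cauchy duality developed earlier in this section yields $(\ell^{\Phi^*}_a)^{*} \cong \ell^{\Phi}_a$, with the duality realized by the Cauchy pairing through Abel means. Consequently, every functional $L \in \mathcal{X}^{*}$ takes the form
\[
L(f, g) = \lim_{r\to 1-} \int_{\T} f(r\zeta)\, \conj{h(r\zeta)} \, dm(\zeta) + \int_{K} g \, d\nu
\]
for a unique pair $(h, \nu) \in \ell^{\Phi}_a \oplus M(K)$. Testing the condition $L|_{\Delta} \equiv 0$ against the monomials $Q(z) = z^{n}$ for $n=0,1,2,\dots$ yields the coefficient identity $\conj{\widehat{h}(n)} = -\int_{K} \zeta^{n}\, d\nu(\zeta)$.

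The third step is to translate this identity back to the language of Cauchy transforms. Since the right-hand side above is, up to a unimodular factor, precisely the $n$-th Taylor coefficient of the Cauchy integral $\Ka(d\nu)$, the existence of a non-trivial $(h, \nu) \in \Delta^{\perp}$ is equivalent to the existence of a non-zero measure $\nu \in M(K)$ whose Cauchy transform has Fourier coefficients forming an $\ell^{\Phi}_a$-summable sequence, that is, $\Ka(d\nu) \in \ell^{\Phi}_a$. Moreover, the coefficient identity guarantees that $\nu = 0$ forces $h = 0$, so the annihilator is trivial if and only if no such non-zero $\nu$ exists, which is exactly the definition of $K$ being removable for the Cauchy transform in $\ell^{\Phi}_a$. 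The main obstacle in executing the plan is the careful bookkeeping of Fourier conventions needed to establish the coefficient identity, together with the verification that the displayed Cauchy-pairing formula really represents every bounded functional on $\ell^{\Phi^*}_a$; this latter point rests on the separability of $\ell^{\Phi^*}_a$, the density of polynomials, and the H\"older-type inequality recorded at the beginning of this section.
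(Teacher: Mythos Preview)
The paper does not supply its own proof of this statement; it is quoted as Theorem 1.2 in \cite{khrushchev1978problem} and used as a black box. Your duality argument in $\ell^{\Phi^*}_a \oplus C(K)$ is exactly the standard route (and essentially Khrushchev's), and it is correct.

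One small inaccuracy: when you write that $-\int_K \zeta^n\, d\nu(\zeta)$ is ``up to a unimodular factor'' the $n$-th Taylor coefficient of $\Ka(d\nu)$, that is not quite right. The $n$-th Taylor coefficient of $\Ka(d\nu)$ is $\int_K \conj{\zeta}^{\,n}\, d\nu(\zeta)$, so what you actually obtain from the annihilator identity is $\widehat{h}(n) = \int_K \conj{\zeta}^{\,n}\, d(-\conj{\nu})(\zeta)$, i.e.\ $h = \Ka(d\mu)$ with $\mu = -\conj{\nu} \in M(K)$. This is conjugation of the measure, not multiplication by a unimodular constant. The correspondence $(h,\nu)\leftrightarrow \mu$ is still a bijection between non-trivial annihilators and non-zero measures in $M(K)$ with $\Ka(d\mu)\in \ell^{\Phi}_a$, so your conclusion stands; just tighten the bookkeeping.
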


We now prove our main result in this subsection.

\begin{prop}\thlabel{PROP:dbrX'} A compact set $K\subset \T$ satisfies the SA-property wrt $\ell^{\Phi^{*}}_a$ if and only if for any $b: \D \to \D$ holomorphic with $1/b\in H^\infty$ and $\{\zeta \in \T: |b(\zeta)|<1 \} \subseteq K$, we have 
\[
\hil(b) \cap \ell^{\Phi}_a = \{0\}.
\]

\end{prop}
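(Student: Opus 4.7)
My plan is to reduce the proposition to a statement about removable sets via Khrushchev's lemma, and then to match removability with the stated triviality condition by running the Aleksandrov--Clark correspondence in both directions.

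By the quoted Khrushchev's lemma, the SA-property of $K$ wrt $\ell^{\Phi^*}_a$ is equivalent to the condition that every $\nu \in M(K)$ with $\widehat{\nu} \in \ell^\Phi$ vanishes. It therefore suffices to match this removability with the triviality of $\mathcal{H}(b) \cap \ell^\Phi_a$ for all admissible $b$. For such $b$, let $\mu$ be its Clark measure; since $|b|=1$ a.e.\ off $K$, the absolutely continuous density of $\mu$ vanishes there and $\mu \in M(K)$. A direct computation from the defining relation $\frac{1+b}{1-b} = 2\Ka\mu - \mu(\T) + i\alpha$ yields
\[
\frac{1}{\Ka\mu(z)} = \frac{2(1-b(z))}{(1+c)+(1-c)b(z)}, \qquad c := \mu(\T)-i\alpha,
\]
whose denominator is bounded below on $\D$ since $\operatorname{Re} c = \mu(\T) > 0$ implies $|1+c|>|1-c|$. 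Hence $1/\Ka\mu \in H^\infty$, and in particular multiplication by $1/\Ka\mu$ preserves $\ell^\Phi_a$.

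$(\Leftarrow)$ Assume $\mathcal{H}(b) \cap \ell^\Phi_a = \{0\}$ for all admissible $b$, and let $\nu \in M(K)$ satisfy $\Ka\nu \in \ell^\Phi_a$. Choose a reference measure $\mu$ on $K$ dominating $\nu$ with $g := d\nu/d\mu \in L^\infty(\mu) \subseteq L^2(\mu)$, arranged so that the induced Clark symbol $b_\mu$ is bounded below; for instance, take $\mu := |\nu| + \kappa\, dm|_K$ for sufficiently large $\kappa > 0$. Then $f := \Ka_\mu g = \Ka\nu/\Ka\mu$ lies in $\mathcal{H}(b_\mu)$, and since $1/\Ka\mu \in H^\infty$ is an $\ell^\Phi_a$-multiplier, also $f \in \ell^\Phi_a$. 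The hypothesis forces $f \equiv 0$, so $\Ka\nu \equiv 0$ and $\nu = 0$.

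$(\Rightarrow)$ Assume removability and take $f \in \mathcal{H}(b) \cap \ell^\Phi_a$ with $f = \Ka_\mu g$ and $\nu := g\, d\mu \in M(K)$, so that $\Ka\nu = f \cdot \Ka\mu$. The main obstacle is that $\Ka\mu \notin H^\infty$ in general, so $f \in \ell^\Phi_a$ does not immediately give $\Ka\nu \in \ell^\Phi_a$ by naive multiplication. My plan is to leverage the SA-property itself: since $\ell^\Phi_a \subseteq \ell^{\Phi^*}_a$ (by \thref{LEM:Conconj}), apply SA to $f$ with the continuous function $0$ on $K$ to obtain polynomials $Q_n \to f$ in $\ell^{\Phi^*}_a$ and $Q_n \to 0$ uniformly on $K \supseteq \supp{\mu}$. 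The Aleksandrov--Clark identification $(\Ka_\mu g)^{*} = g$ $\mu$-a.e.\ on $\supp{\mu}$ gives $\int g \overline{Q_n}\, d\mu \to 0$ by uniform convergence on $K$, while the $\ell^\Phi$-$\ell^{\Phi^*}$ duality pairing gives $\int f \overline{Q_n}\, dm \to \norm{f}_2^2$; reconciling the two pairings via Poltoratski's disintegration for the Clark family of $b$ then forces $\norm{f}_2 = 0$, hence $f \equiv 0$. This reconciliation of the $dm$- and $d\mu$-pairings is the crucial and most delicate step of the argument, and I expect it to be the principal difficulty in executing the plan.
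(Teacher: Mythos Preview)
Your reduction via Khrushchev's lemma is correct and matches the paper's first step, but both directions then have genuine gaps. In $(\Leftarrow)$, the claim that multiplication by $1/\Ka\mu$ preserves $\ell^\Phi_a$ because $1/\Ka\mu\in H^\infty$ is false: when $\Phi(x)/x^2\to\infty$, every multiplier of $\ell^\Phi_a$ must itself lie in $\ell^\Phi_a$ (take $f\equiv1$), and a generic $H^\infty$ function does not. Your assertion that $b_\mu$ is bounded below for $\mu=|\nu|+\kappa\,dm|_K$ with $\kappa$ large also fails, already when $K$ is an arc symmetric about the real axis: the Herglotz transform of $dm|_K$ is real on $(-1,1)$ and ranges continuously from values near $0$ to values near $\kappa$, so it hits $1$ and $b_\mu$ vanishes there. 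The paper avoids both problems by choosing the explicit outer function $b(z)=\exp\big(-\int_K\frac{\zeta+z}{\zeta-z}\,dm\big)$, for which $e^{-1}\le|b|\le1$ and $1-|b|^2=(1-e^{-2})1_K$; the candidate element of $\hil(b)$ is then a constant multiple of $\Ka\nu$ itself, so no multiplier argument is needed, and membership in $\hil(b)$ is verified via the Toeplitz relation $T_{\conj b}f=g$.

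In $(\Rightarrow)$, your claim that the Clark measure $\mu$ lies in $M(K)$ only treats the absolutely continuous part; the singular part of $\mu$ is carried by $\{\zeta:b(\zeta)=1\}$, which may well meet $\T\setminus K$. The subsequent ``reconciliation via Poltoratski's disintegration'' is not an argument: Poltoratski's boundary theorem gives $(\Ka_\mu g)^*=g$ only $\mu_s$-a.e., not $\mu$-a.e., and Aleksandrov's disintegration formula concerns the entire Clark family $\{\mu_\alpha\}_{\alpha\in\T}$ rather than a single $\mu$, so it does not relate $\int f\,\conj{Q_n}\,dm$ to $\int g\,\conj{Q_n}\,d\mu$. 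The paper bypasses Clark measures entirely here: via the Toeplitz characterization of $\hil(b)$ (Proposition~2 in \cite{dbrcont}) one obtains $f=\Ka\big(f_+\,\conj{b^{-1}}(1-|b|^2)\,dm\big)$, an absolutely continuous measure whose density vanishes off $K$ by hypothesis, and then removability applies directly.
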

In the statement of the Proposition, we may without loss of generality assume that $m(K)<1$. This implies that $b$ is an extreme point in the unit-sphere of $H^\infty$, which is characterized by the size condition
\[
\int_{\T} \log (1-|b|^2) dm =- \infty.
\]
For instance, see the book by D. Sarason in \cite{sarasonbook}. Meanwhile, the assumption $1/b \in H^\infty$ ensures that $b$ is an outer function.
\begin{proof}[Proof of \thref{PROP:dbrX'}]
Suppose $K$ satisfies the SA-property wrt $\ell_a^{\Phi^*}$, fix a holomorphic self-map $b: \D \to \D$ with $1/b \in H^\infty$ and $\{\zeta \in \T: |b(\zeta)|<1 \} \subseteq K$. Then $b$ is an outer function and an extreme point of the unit-sphere of $H^\infty$. Hence if $f\in \hil(b) \cap \ell_a^{\Phi}$ is an arbitrary element, then according to Proposition 2 in \cite{dbrcont}, there exists a unique function $f_+ \in L^2((1-|b|^2)dm)$ associated to $f$, which satisfies the equation
\[
T_{\conj{b}}(f)(z) = \int_{K} \frac{f_+(\zeta)}{1-\conj{\zeta}z} (1-|b(\zeta)|^2)dm(\zeta), \qquad z\in \D.
\]
Here $T_{\conj{b}}(f) := \Ka (f\conj{b}dm)$ denotes the Toeplitz operator with symbol $\conj{b}$. Applying $T_{\conj{1/b}}$ on both sides of this equation and using the fact that $T_{\conj{a}}\hil(b) \subseteq \hil(b)$ for any $a \in H^\infty$ (for instance, see \cite{sarasonbook}) we obtain
\[
f(z) = \int_{K} \frac{f_+(\zeta)\conj{b^{-1}(\zeta)}}{1-\conj{\zeta}z} (1-|b(\zeta)|^2) dm(\zeta), \qquad z\in \D.
\]
This means that $f \in \hil(b) \cap \, \ell^{\Phi}_a$ is the Cauchy transform of the finite measure 
\[
d\mu(\zeta)= f_+(\zeta)\conj{b^{-1}(\zeta)} (1-|b(\zeta)|^2) dm(\zeta),
\]
which is supported in $K$. But since $K$ was assumed to satisfy the SA-property wrt $\ell^{\Phi^*}_a$, Khrushchev's lemma implies that $f_+=0$, hence $T_{\conj{b}}(f)=0$. Since $b$ is outer, we conclude that $f \equiv 0$. Conversely, if $K$ does not satisfy the SA-property wrt $\ell^{\Phi^*}_a$, then Khrushchev's lemma implies that there exists a non-trivial complex Borel measure $\mu$ supported in $K$, such that $\Ka(d\mu) \in \ell^{\Phi}_a$. Since $\ell^{\Phi}_a \subset H^2$, we must actually have that $d\mu = h dm$ with $h\in L^2(dm)$, thus $\Ka(hdm)\in \ell^{\Phi}_a$. Define the outer function
\[
b(z) = \exp \left( - \int_{K} \frac{\zeta+z}{\zeta-z} dm(\zeta) \right), \qquad z \in \D
\]
which is a holomorphic self-map $b: \D \to \D$ with $1-|b|^2 = 1_K (1-e^{-2})$, hence an extreme-point in the unit-sphere of $H^\infty$. Consider the function
\[
g(z) := \int_{K} \frac{h(\zeta)}{1-\conj{\zeta}z} (1-|b(\zeta)|^2) dm(\zeta), \qquad z\in \D
\]
and note that the assumption on $h$ ensures that $g \in \ell^{\Phi}_a$. But then $f = T_{\conj{1/b}}g \in H^2$ and satisfies the equation $g= T_{\conj{b}}f$. Appealing to Proposition 2 in \cite{dbrcont} once again, we conclude that $f\in \hil(b)$, thus $g\in \hil(b) \cap \ell^{\Phi}_a$.

\end{proof}

Our next observation provides a dual reformulation of the problem of cyclic inner functions.

\begin{prop} \thlabel{PROP:dualcyc} Let $\Theta$ be an inner function. Then $K_\Theta \cap \ell_a^{\Phi} =\{0\}$ if and only if $\Theta$ is cyclic in $\ell^{\Phi^*}_a$.
\end{prop}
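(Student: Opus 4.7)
The plan is to deduce this via a standard Hahn–Banach duality argument, exploiting the Cauchy-dual relation $(\ell^{\Phi^*}_a)' \cong \ell^{\Phi}_a$ recorded in the preceding subsection. The pairing is the natural Fourier pairing
\[
\langle f, g \rangle := \lim_{r\to 1-} \int_{\T} f(r\zeta)\conj{g(r\zeta)}\, dm(\zeta) = \sum_{n=0}^\infty \widehat{f}(n)\conj{\widehat{g}(n)},
\]
which by the H\"older-type inequality is absolutely convergent for $f \in \ell^{\Phi^*}_a$ and $g \in \ell^{\Phi}_a$. Since \thref{LEM:Conconj} forces $\ell^{\Phi}_a \subset H^2 \subset \ell^{\Phi^*}_a$, both $z^n\Theta$ and any $g \in \ell^{\Phi}_a$ sit inside $H^2$, and the pairing above agrees with the ordinary $H^2$-inner product on such elements.

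First I would handle the implication $K_\Theta \cap \ell^{\Phi}_a = \{0\} \Longrightarrow \Theta \text{ cyclic in } \ell^{\Phi^*}_a$. Since $\ell^{\Phi^*}_a$ is a separable Banach space with the holomorphic polynomials dense, cyclicity of $\Theta$ is equivalent to the statement that no non-trivial continuous linear functional on $\ell^{\Phi^*}_a$ annihilates the family $\{z^n\Theta : n \geq 0\}$. By the duality $(\ell^{\Phi^*}_a)' \cong \ell^{\Phi}_a$, any such annihilating functional is represented by some $g \in \ell^{\Phi}_a$ with $\langle z^n\Theta, g \rangle = 0$ for every $n \geq 0$. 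The remark in the previous paragraph identifies this with $\langle z^n\Theta, g \rangle_{H^2} = 0$ for all $n\geq 0$, which says precisely that $g \perp \Theta H^2$ in $H^2$, i.e., $g \in K_\Theta$. The hypothesis then forces $g\equiv 0$, proving cyclicity.

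For the converse, I would run the argument in reverse: if $K_\Theta \cap \ell^{\Phi}_a$ contains a non-zero element $g$, then $g$ is, via the pairing, a non-trivial element of $(\ell^{\Phi^*}_a)'$ that vanishes on every $z^n\Theta$. By Hahn–Banach, $\{z^n\Theta : n \geq 0\}$ cannot have dense linear span in $\ell^{\Phi^*}_a$, so $\Theta$ fails to be cyclic.

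I do not anticipate a real obstacle here: the only points requiring care are (a) checking that the Fourier pairing genuinely realises the duality $(\ell^{\Phi^*}_a)' \cong \ell^{\Phi}_a$ (which is the standard Orlicz duality recorded in the excerpt and in \cite{lindenstrauss2013classical}), (b) verifying that polynomials are dense in $\ell^{\Phi^*}_a$ so that Hahn–Banach separation is available on the span of $\{z^n\Theta\}$, and (c) noting that both sides of the pairing live inside $H^2$ so that the abstract Orlicz pairing coincides with the $H^2$-inner product, making the orthogonality interpretation $g \perp \Theta H^2$ legitimate. All three facts have been either stated or cited in the preceding paragraphs, so the proof reduces to assembling them cleanly.
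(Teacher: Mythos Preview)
Your proposal is correct and is essentially the same Hahn--Banach duality argument the paper uses. The only cosmetic difference is that the paper proves the implication ``$\Theta$ cyclic $\Longrightarrow K_\Theta\cap\ell^{\Phi}_a=\{0\}$'' by a direct H\"older estimate (write $\|f\|_{H^2}^2=\langle f-\Theta Q_n, f\rangle$ and let $n\to\infty$), whereas you phrase both directions uniformly via the annihilator characterization of the closed span; these are the same computation seen from two sides.
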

\begin{proof} Suppose $\Theta$ is cyclic in $\ell^{\Phi^*}_a$. Fix an arbitrary element $f\in K_{\Theta} \cap \ell^{\Phi}_a$ and pick polynomials $(Q_n)_n$ such that $\norm{\Theta Q_n -f}_{\Phi^*} \to 0$. Then
\[
\int_{\T}\abs{f}^2dm = \abs{ \int_{\T} \conj{f}(f-\Theta Q_n) dm } \leq 2 \norm{f}_{\Phi} \norm{\Theta Q_n -f}_{\Phi^*} \to 0, \qquad n\to \infty.
\]
Therefore, $K_{\Theta} \cap \ell_a^{\Phi} =\{0\}$. If $\Theta$ is not cyclic in $\ell^{\Phi^*}_a$, then the Hahn-Banach separation Theorem immediately implies that there exists a non-trivial element in $K_{\Theta} \cap \ell^{\Phi}_a$.
\end{proof}

\section{Cyclic inner functions in $\B_0(w)$}\label{SEC4}

\subsection{Sharp Control}
Here we devote our efforts to proving \thref{THM:CycinnerBw}. Our main ingredient will be the following remarkable result by Anderson, Fernandez and Shields in \cite{anderson1991inner}, on the existence of positive singular measures with a sharp simultaneous control on modulus of continuity and modulus of smoothness.

\begin{thm}[Theorem 4, \cite{anderson1991inner}]\thlabel{MOCMOS} Let $\alpha, \beta$ be positive continuous functions on $(0,1]$, such that $\alpha$ is decreasing, $\beta$ is increasing with $\alpha(1)\geq 2$, $\beta(1)\leq 1$, and 
\begin{enumerate}
    \item[(i)] $\alpha(t) \to \infty$, $\beta(t) \to 0$ as $t\to 0+$.
    \item[(ii)] $\int_0^1 \beta^2(t) dt/t = \infty$.
    \item[(iii)] $\beta(4t)\leq 2\beta(t)$, $0<t<1/4$.
\end{enumerate}
    Then there exists a singular Borel probability measure $\mu$ on $\T$ satisfying 
\begin{enumerate}
    \item[(1)] $\mu(I) \leq 8 |I| \alpha(|I|)$ for any arc $I \subseteq \T$. 
    \item[(2)] $\abs{\mu(I)-\mu(I')} \leq 36 \abs{I} \beta(|I|)$ for every adjacent arcs $I,I'$ of same lengths.
\end{enumerate}
\end{thm}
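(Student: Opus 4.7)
The plan is a multiplicative martingale / Riesz-product type construction on dyadic arcs of $\T$. Let $\mathcal{D}_n$ denote the partition of $\T$ into $2^n$ half-open arcs of length $2^{-n}$. I would build a sequence of probability densities $h_n$, each constant on the arcs of $\mathcal{D}_n$, via the refinement rule
\[
h_{n+1}\big|_{I^{\pm}} = h_n\big|_I \bigl(1 \pm \varepsilon_{n,I}\bigr),
\]
where $I^{\pm}$ are the two halves of $I \in \mathcal{D}_n$ and the signed amplitudes $\varepsilon_{n,I} \in [-1,1]$ are to be chosen. Setting $d\mu_n = h_n\,dm$, this refinement satisfies $\mu_n(I) = \mu_{n+1}(I)$ for each $I \in \mathcal{D}_n$, so $(\mu_n)$ converges weak-$*$ to a Borel probability measure $\mu$. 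The whole game is to pick the $\varepsilon_{n,I}$ to balance three competing demands: smoothness of $\mu$ across adjacent arcs, the density ceiling coming from $\alpha$, and enough log-fluctuation for singularity.

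For the smoothness estimate (2), I would enforce $|\varepsilon_{n,I}| \leq c\,\beta(2^{-n})$ and arrange that $\varepsilon_{n,I}$ varies Lipschitz-continuously between neighbouring cells at each scale. Then $\mu(I)-\mu(I')$ for adjacent $I,I' \in \mathcal{D}_n$ telescopes into a cascade of increments at scales $2^{-k}$, $k\geq n$, whose absolute sum is controlled by $\sum_{k\geq n} 2^{-k} \beta(2^{-k}) \lesssim 2^{-n}\beta(2^{-n})$ thanks to the doubling property (iii). For the ceiling (1), I would introduce a soft-threshold rule such as
\[
\varepsilon_{n,I} = \beta(2^{-n})\, \varphi\!\left( h_n\big|_I / \alpha(2^{-n}) \right),
\]
with $\varphi$ a fixed Lipschitz function decreasing from $+1$ near $0$ to $-1$ near $1$, so that $h_{n+1}$ is pushed downward whenever the current density approaches the moving ceiling $\alpha(2^{-n})$. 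A routine induction then yields $h_n|_I \leq \alpha(2^{-n})$ (up to a constant), hence (1) via $\mu(I) \leq h_n|_I \cdot |I|$ for $I \in \mathcal{D}_n$, and (2) via the geometric cascade bound above.

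Singularity of $\mu$ with respect to $dm$ is where hypothesis (ii) enters. Since $\alpha(t) \uparrow \infty$ and $\beta(t) \downarrow 0$ as $t \downarrow 0$, the ceiling recedes faster than the per-step amplitude, so along most dyadic paths the density $h_n$ behaves like a genuine multiplicative martingale with log-increments of variance $\asymp \beta(2^{-n})^2$. The total log-variance along a typical dyadic fibre is therefore $\sum_n \beta(2^{-n})^2 \asymp \int_0^1 \beta(t)^2\,dt/t = +\infty$ by (ii). A Kolmogorov three-series / Kakutani dichotomy argument applied to $\log h_n$ then shows $h_n(\zeta) \to \infty$ for $dm$-a.e.\ $\zeta$, whence $\mu \perp dm$.

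The principal obstacle is the tension between the one-sided nature of the $\alpha$-ceiling, which biases $\varepsilon_{n,I}$ toward $-1$ in a possibly abrupt fashion, and the two-sided smoothness of (2), which demands that $\varepsilon_{n,I}$ varies gently across neighbouring cells. A bang-bang sign choice at the ceiling would ruin (2) at the boundary of the attenuated region, and the fix is exactly the smooth profile $\varphi$ above. One must then confirm that, despite the attenuation of amplitudes near the ceiling, the set of dyadic fibres which spend only a vanishing fraction of scales in the attenuated regime has full $dm$-measure — which is precisely where the divergence hypothesis (ii) is used, in the opposite direction from the routine case of convergence. Tuning the constants $8$ and $36$ in (1)–(2) amounts to absorbing the geometric cascade constant from the doubling of $\beta$, and is a bookkeeping step once the scheme above is implemented.
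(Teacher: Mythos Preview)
The paper does not prove this theorem; it is quoted verbatim from Anderson, Fernandez and Shields \cite{anderson1991inner} as an input to the proof of \thref{THM:CycinnerBw}, with no argument supplied. So there is no ``paper's own proof'' to compare your sketch against.

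That said, your multiplicative-cascade outline is indeed the approach of the original reference, and the overall architecture (dyadic martingale densities, amplitude $\asymp\beta(2^{-n})$ per generation, a feedback rule to respect the ceiling $\alpha$) is correct. Two points deserve correction. First, for singularity you want $h_n\to 0$ $dm$-a.e., not $h_n\to\infty$; the latter is impossible since $\int_{\T} h_n\,dm=1$ for every $n$. The mechanism is that $\log(1+\varepsilon X)$ with $X=\pm1$ equiprobable has mean $\tfrac12\log(1-\varepsilon^2)<0$, so $\log h_n$ drifts to $-\infty$ at rate $\asymp -\sum_k \beta(2^{-k})^2$, which diverges by hypothesis (ii). Second, your amplitudes $\varepsilon_{n,I}=\beta(2^{-n})\,\varphi(h_n|_I/\alpha(2^{-n}))$ are path-dependent, hence not independent across scales, so a bare Kolmogorov three-series argument does not apply; one needs either a martingale-difference version or the direct combinatorial estimates carried out in \cite{anderson1991inner}. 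These are fixable, but they are exactly the places where the original paper does real work, and your sketch underplays them.
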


\subsection{A criterion for cyclicity}
Our first proposition provides a sufficient condition for a function to be cyclic in $\B_0(w)$, and should be viewed as an adapted version of Theorem 1 in \cite{anderson1991inner}. 

\begin{prop}\thlabel{PROP:SuffCyc} Let $w$ be a majorant and $u,v$ be a continuous non-decreasing functions with the property that $v(t)/u(t) = o(w(t))$ as $t\to 0+$. Then $f$ is cyclic in $\B_0(w)$ if it satisfies the following conditions:
\begin{enumerate}
    \item[(a.)] There exists a constant $c_1>0$ such that
    \[
\abs{f(z)} \geq c_1 u(1-|z|), \qquad z\in \D.
    \]
    \item[(b.)] There exists a constant $c_2>0$ such that 
    \[
    (1-|z|)\abs{f'(z)} \leq c_2v(1-|z|), \qquad z \in \D
    \]
\end{enumerate}
\end{prop}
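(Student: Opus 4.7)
The plan is to show that the constant function $1$ lies in the closed $\B_0(w)$-span of the family $\{z^n f : n = 0, 1, 2, \dots\}$. Since holomorphic polynomials are dense in $\B_0(w)$ and multiplication by any polynomial is a bounded operator on $\B_0(w)$---the boundedness being routine once one observes that $\B_0(w) \subset H^\infty$, a consequence of the estimate $\int_0^1 w(t)/t\,dt < \infty$ valid for any majorant---this reduction is sufficient.

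The main construction is via dilation regularization. For each $r \in (0,1)$ I would introduce $f_r(z) := f(rz)$. Hypothesis (a) yields $|f_r(z)| \geq c_1 u(1-r) > 0$ for $z \in \cD$, so $1/f_r$ is holomorphic on a neighborhood of $\cD$, and by Runge-type approximation there exist polynomials $Q_{r,j}$ with $\|Q_{r,j} - 1/f_r\|_\infty + \|Q_{r,j}' - (1/f_r)'\|_\infty \to 0$ as $j \to \infty$. The product-rule expansion
\[
(Q_{r,j} f - f/f_r)'(z) \, = \, (Q_{r,j}'(z) - (1/f_r)'(z))\, f(z) + (Q_{r,j}(z) - 1/f_r(z))\, f'(z),
\]
together with $f \in H^\infty$, the elementary bound $\sup_{z \in \D}(1-|z|)/w(1-|z|) < \infty$, and $\|f\|_{\B_0(w)} < \infty$, gives $Q_{r,j} f \to f/f_r$ in $\B_0(w)$ as $j \to \infty$. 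Hence $f/f_r$ lies in the closed span of $\{z^n f\}$ for every $r \in (0, 1)$.

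The heart of the argument is then to prove $\|f/f_r - 1\|_{\B_0(w)} \to 0$ as $r \to 1-$. Since $(f/f_r)(0) = 1$, only the seminorm needs attention. Setting $g := f'/f$, I would use the identity
\[
(f/f_r - 1)'(z) \, = \, \frac{f(z)}{f_r(z)} \bigl[\, g(z) - r\, g(rz) \,\bigr],
\]
together with the pointwise bound $|g(\zeta)| \leq (c_2/c_1)\, v(1-|\zeta|)/[(1-|\zeta|)\, u(1-|\zeta|)]$ coming from (a) and (b). I would split the supremum over $\D$ at a cutoff $\rho = \rho(r) < 1$: on $\{|z| \leq \rho\}$ one uses that $f_r \to f$ uniformly together with its derivative and that $|f|$ is bounded below by a positive constant there, so the contribution vanishes for each fixed $\rho$ as $r \to 1-$; on $\{|z| > \rho\}$, the asymptotic $v(t)/u(t) = o(w(t))$ controls $(1-|z|)/w(1-|z|) \cdot |g(z)|$, and an analogous control on $(1-|z|)/w(1-|z|) \cdot |g(rz)|$ follows from $(1-|z|) \leq (1-r|z|)$ and the monotonicity of $u$ and $v$.

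The main obstacle is this boundary estimate. Hypothesis (a) only provides $|f_r(z)| \geq c_1 u(1-r|z|)$, so the factor $|f/f_r|(z)$ is not a priori uniformly bounded (it may degenerate as $u(t) \to 0$), and one must verify that the smallness of the logarithmic-derivative difference $g(z) - r\,g(rz)$ supplied by $v/u = o(w)$ genuinely dominates. A coupled choice of cutoff $\rho = \rho(r)$---for instance $\rho = \sqrt{r}$---together with regularity of the majorant $w$ (yielding, e.g., control of ratios of the form $w(1-r|z|)/w(1-|z|)$) should close the estimate and give $f/f_r \to 1$ in $\B_0(w)$. Combining with the preceding step places $1$ in the closed span of $\{z^n f\}$, and cyclicity of $f$ in $\B_0(w)$ follows.
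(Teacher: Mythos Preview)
Your overall plan—showing $f/f_r \to 1$ in $\B_0(w)$ via dilation, then approximating $1/f_r$ by polynomials—matches the paper's. The polynomial-approximation step and the reduction to $1 \in [f]$ are fine.

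The gap is precisely where you flag it: the boundary estimate. Writing $(f/f_r-1)' = (f/f_r)\bigl[g - r g(r\,\cdot)\bigr]$ and bounding $|g - rg(r\cdot)| \leq |g| + |g(r\cdot)|$, you are left with the product of a potentially large factor $|f/f_r|$ and terms of order $(v/u)/w$. A coupled cutoff $\rho(r)$ alone does not close this: on the annulus $\rho < |z| < 1$ you only know $|f_r(z)| \geq c_1 u(1-r|z|)$, and nothing in your sketch prevents $1/u(1-r|z|)$ from dominating the $o(1)$ coming from $v/u = o(w)$; the naive triangle-inequality bound on $g-rg(r\cdot)$ loses exactly the cancellation you need.

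There are two ways to fill it. The quickest, which you nearly have in hand, is to observe that $|f/f_r|$ \emph{is} uniformly bounded: condition (a) forces $f$ to be zero-free, so integrate the logarithmic derivative along the radial segment from $rz$ to $z$ and use $|g(\zeta)| \leq C\, w(1-|\zeta|)/(1-|\zeta|)$ (from $v/u = O(w)$) together with $\int_0^1 w(t)/t\, dt < \infty$. With $\sup_{r,z}|f(z)/f_r(z)| \leq M$ in hand, your inner/outer splitting goes through directly.

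The paper instead expands $(f/f_r-1)'$ as
\[
A_1(z)+A_2(z) \;=\; \frac{f'(z)-rf'(rz)}{f(rz)} \;-\; r\,\frac{\bigl(f(z)-f(rz)\bigr)\,f'(rz)}{f(rz)^2},
\]
estimates $A_1$ termwise via $v/u \lesssim w$ and monotonicity of $w(t)/t$, and controls $A_2$ by first bounding $|f(z)-f(rz)| \leq c_2\, v(1-r|z|)\log\frac{1-r|z|}{1-|z|}$ via the fundamental theorem of calculus, then using the majorant regularity ($w(t)/t^\gamma$ non-decreasing) to absorb the logarithm. This yields only uniform \emph{boundedness} of $f/f_r-1$ in the $\B(w)$-seminorm; to upgrade to convergence, the paper repeats the argument with an auxiliary majorant $\widetilde{w}$ satisfying $\widetilde{w}/w \to 0$ (available precisely because $v/u = o(w)$, not merely $O(w)$) and invokes compactness of the inclusion $\B_{\widetilde{w}} \hookrightarrow \B_0(w)$ to extract a subsequence $r_n$ with $f/f_{r_n} \to 1$. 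Your direct-convergence route avoids this compactness trick, but only once the uniform bound on $|f/f_r|$ is actually supplied.
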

\begin{proof} The proof is principally inspired from Theorem 1 in \cite{anderson1991inner}, hence we shall only provide a sketch. We shall primarily only assume that $u(t)/v(t) \leq cw(t)$ for some constant $c>0$ independent of $t$, and prove that $(f-f_r)/f_r$ is uniformly bounded in the semi-norm of $\B_0(w)$. To this end, we split the estimates into two terms
\[
\frac{d}{dz}\left(\frac{f(z)-f(rz)}{f(rz)}\right) = \frac{f'(z)-rf'(rz)}{f(rz)} - r \frac{\left(f(z)-f(rz)\right)f'(rz)}{f(rz)^2} :=A_1(z) + A_2(z).
\]
\proofpart{1}{Estimating $A_1$:}
Note that
\[
\frac{1-|z|}{w(1-|z|)} |A_1(z)| \leq \frac{1-|z|}{w(1-|z|)} \frac{\abs{f'(z)}}{\abs{f(rz)}} +\frac{1-|z|}{w(1-|z|)} \frac{\abs{f'(rz)}}{\abs{f(rz)}} := A + B.
\]
In order to estimate the quantity $A$ we use the fact that $u$ is non-decreasing and the assumption that $v/u\leq cw$ on $(0,1]$, to obtain 
\[
A \leq c_2 /c_1 \frac{v(1-|z|)}{u(1-r|z|)} \frac{1}{w(1-|z|)} \leq cc_2/c_1.
\]
For quantity $B$, we again use the assumption $v/u\leq cw$ on $(0,1]$, but now together with the fact that $w(t)/t$ is non-decreasing on $(0,1]$. This implies
\[
B \leq c_2/c_1 \frac{1}{w(1-|z|)} \frac{1-|z|}{1-r|z|} \frac{v(1-r|z|)}{u(1-r|z|)} \leq cc_2 /c_1 \frac{w(1-r|z|)}{1-r|z|} \frac{1-|z|}{w(1-|z|)} \leq cc_2/c_1.
\]
Therefore, the term $A_1$ has uniformly bounded the $\B(w)$ semi-norm.
\proofpart{2}{Estimating $A_2$:} For the second term, we shall need the following straightforward estimate, which stems from the fundamental theorem of calculus:
\begin{equation}\label{oscest}
\abs{f(z)-f(rz)} \leq c_2 \int_{r|z|}^{|z|} \frac{v(1-t)}{1-t}dt \leq c_2 v(1-r|z|) \log \frac{1-r|z|}{1-|z|}, \qquad z \in \D.
\end{equation}
With this at hand, we proceed by
\begin{multline*}
    \frac{1-|z|}{w(1-|z|)} \abs{A_2(z)} =  \frac{1-|z|}{w(1-|z|)} \abs{f(z)-f(rz)} \frac{\abs{f'(rz)}}{\abs{f(rz)}^2} \leq \\
    ( c_2/c_1)^2\frac{1-|z|}{1-r|z|} \log \frac{1-r|z|}{1-|z|} \frac{v^2(1-r|z|)}{u^2(1-r|z|)} \frac{1}{w(1-|z|)} \leq \\
    c^2 (c_2/c_1)^2 \frac{w(1-r|z|)}{w(1-|z|)} \frac{1-|z|}{1-r|z|} \log \frac{1-r|z|}{1-|z|}.
\end{multline*}
Utilizing that $w(t)/t^{\gamma}$ is non-increasing for some $0<\gamma <1$, we obtain
\[
\frac{w(1-r|z|)}{w(1-|z|)} \frac{1-|z|}{1-r|z|} \leq \left(\frac{1-|z|}{1-r|z|}\right)^{1-\gamma}.
\]
This observation together with the fact that $x^{\gamma-1}\log x$ is uniformly bounded for $x\geq 1$ gives the desired bound for $A_2$. We have thus proved that 
\[
\sup_{0<r<1} \norm{f/f_r -1}_{w} < \infty.
\]
By means of substitution $w$ with a majorant $\widetilde{w}$ satisfying $\widetilde{w}(t)/w(t) \to 0$ as $t\to 0+$, we may repeat the same argument as above to deduce that $f/f_r$ is uniformly bounded in the norm of $\B_{\widetilde{w}}$. Indeed, this is possible to achieve, if we now utilize the assumption that $v(t)/u(t) = o(w(t))$ as $t\to 0+$. Appealing to the compactness of the inclusion map $\B_{\widetilde{w}}$ into $\B_0(w)$, it then follows that there exists a subsequence $\{r_n\}_n$ with $r_n \to 1-$, such that 
\[
\lim_{n\to \infty} \norm{ f/f_{r_n} -1}_{w} = 0.
\]
Since $1/f_r$ extend holomorphically across $\T$, it easy to pass to holomorphic polynomials $(Q_n)_n$ with $Q_n f \to 1$ in $\B_0(w)$. In conjunction with the fact that the polynomials are dense in $\B_0(w)$, we conclude that $f$ is cyclic in $\B_0(w)$.

\end{proof}

We are now ready to prove the main result in this section.

\begin{proof}[Proof of \thref{THM:CycinnerBw}]
Let $\alpha, \beta$ be a pair of functions satisfying the hypothesis of \thref{MOCMOS}, which are to be specified in a moment, and let $\mu$ denote the corresponding positive Borel measure on $\T$ singular wrt $dm$ which stems from \thref{MOCMOS}. We argue that the associated singular inner function $\Theta = S_{\mu}$ is cyclic in $\B_0(w)$. Note that property $(1)$ in \thref{MOCMOS}, in conjunction with a standard estimates of the Poisson kernel, implies that there exists an absolute constant $c_1>0$ such that 
\[
\abs{S_{\mu}(z)} \geq \exp \left( - c_1 \alpha(1-|z|) \right), \qquad z \in \D. 
\]
Meanwhile, condition $(2)$ in \thref{MOCMOS} implies that there exists a constant $c_2>0$ such that
\[
(1-|z|)\abs{S_{\mu}'(z)}\leq 2(1-|z|) \abs{H'(\mu)(z)} \leq c_2 \beta(1-|z|), \qquad z \in \D.
\]
For further details on these matters, see Theorem 2 and Lemma 4 in \cite{anderson1991inner}. According to \thref{PROP:SuffCyc}, it suffices to verify that the functions $\alpha, \beta$ can be chosen in such a way that 
\begin{equation}\label{EQ:betaalpha}
\beta(t) / \exp(-c_1 \alpha(t) ) = o(w(t)), \qquad t\to 0+.
\end{equation}
Note that the hypothesis in \thref{MOCMOS} essentially puts no serious restriction on $\alpha$, hence by means of slightly enlarging the constant $c_1>0$, it suffices to choose $\alpha, \beta$ so that 
\[
\beta(t) / \exp(-c_1 \alpha(t) ) \asymp w(t), \qquad t\to 0+.
\]
Recalling that $\beta^2$ must fail the Dini-condition, we must also require that 
\begin{equation}\label{EQ:w^2alpha}
\int_0^1 w^2(t) \exp(-2c_1 \alpha(t)) \frac{dt}{t} = +\infty.
\end{equation}
Since $w^2$ was assumed to fail the Dini-condition itself, we can pick $\alpha$ to tend to infinity at an arbitrary slow rate, in such a way that \eqref{EQ:w^2alpha} holds. For such a choice of $\alpha$, we may, for instance, take $\beta = w \exp(-c_1 \alpha)$. Here, we note that doubling properties of $\alpha$ and $w$ carry over to doubling properties of $\beta$, hence the additional condition $(iii)$ on $\beta$ can easily be met be means of slightly modifying $\alpha$ (if necessary). We omit the details. 
\end{proof}

\section{Orlicz embeddings of Bloch spaces}\label{SEC5}
The following embedding result is the center of attention in this section, and is interesting in its own right. 

\begin{thm}\thlabel{THM:Bwlp} Let $\Psi$ be a continuous function in $[0,\infty)$ with the property that $\Psi(x)/x^2$ is increasing and tends to $0$ as $x\to 0+$. Then there exists a majorant $w$ with the property that 
\[
\int_0^1 \frac{w^2(t)}{t} dt = \infty,
\]
such that $\B_0(w)$ is continuously contained in $\ell^{\Psi}_a$.
    
\end{thm}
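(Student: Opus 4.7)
The plan is to combine a dyadic Littlewood-Paley control of Taylor coefficients of functions in $\B_0(w)$ with a block construction of $w$ that exploits the decay $\eta(x) := \Psi(x)/x^2 \to 0$ as $x \downarrow 0$. Writing $v_k := w(2^{-k})$ and $I_k := \{n : 2^k \leq n < 2^{k+1}\}$, I would apply Parseval to $f'$ on the circle $|z| = 1 - 2^{-k}$ together with the defining bound $|f'(z)|(1-|z|) \leq \|f\|_w\,w(1-|z|)$ to obtain the $\ell^2$ block estimate $\sum_{n \in I_k} |\widehat f(n)|^2 \lesssim v_k^2 \|f\|_w^2$; a parallel Cauchy estimate applied to $f'$ on the same circle yields the pointwise bound $|\widehat f(n)| \lesssim v_k \|f\|_w$ for $n \in I_k$. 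Since $\Psi(a) = a^2 \eta(a)$ with $\eta$ increasing,
\[
\sum_{n \in I_k}\Psi(|\widehat f(n)|) \;\leq\; \eta(Cv_k\|f\|_w)\sum_{n \in I_k}|\widehat f(n)|^2 \;\lesssim\; \eta(Cv_k\|f\|_w)\, v_k^2 \|f\|_w^2.
\]

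Summing in $k$ and rescaling via the hypothesis that $\Psi(x)/x^2$ is increasing (which gives $\Psi(x/M) \leq \Psi(x)/M^2$ for $M \geq 1$), the continuity of $\B_0(w) \hookrightarrow \ell^\Psi_a$ reduces to the single summability condition $\sum_k \eta(Cv_k) v_k^2 < \infty$; meanwhile a dyadic comparison gives $\int_0^1 w^2(t)/t\,dt \asymp \sum_k v_k^2$, so the failure of the square Dini condition becomes $\sum_k v_k^2 = \infty$. The heart of the proof is then to construct $v_k$ subject to three competing constraints: (a) the majorant inequality $v_{k+1} \geq 2^{-\gamma} v_k$ for some $\gamma \in (0,1)$; (b) $\sum v_k^2 = \infty$; (c) $\sum \eta(Cv_k)v_k^2 < \infty$. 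I would fix a small $\gamma > 0$, set heights $Y_j = 2^{-\gamma j}$, and place $v_k = Y_j$ on a block $B_j$ of length $L_j$, so that (a) is saturated at block boundaries and trivial within blocks. Using $\eta(Y_j) \to 0$, pick a sparse increasing subsequence $j_1 < j_2 < \cdots$ with $\eta(CY_{j_m}) \leq 2^{-m}$; set $L_{j_m} = \lceil Y_{j_m}^{-2}\rceil$ on this subsequence and $L_j = 1$ otherwise. Then
\[
\sum_j L_j Y_j^2 \;\geq\; \sum_m 1 = \infty, \qquad \sum_j L_j Y_j^2 \eta(CY_j) \;\leq\; \sum_m 2^{-m} + \sum_j Y_j^2 < \infty,
\]
which gives (b) and (c). Finally, extend these dyadic values to a continuous $w$ on $(0,1]$, taking $w$ constant within each block and of the form $c\cdot t^\gamma$ across block boundaries, so that $w(t)/t^\gamma$ remains monotone throughout.

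The principal obstacle is the tension in (a)-(c): the majorant condition forbids fast decay of $v_k$, divergence of $\sum v_k^2$ requires $v_k$ not to be too small on average, and convergence of the weighted series forces $v_k$ to land where $\eta$ is small. The unlocking observation is that, while $v_k$ may not \emph{decrease} faster than $2^{-\gamma}$ per step, it is free to \emph{stay constant} for arbitrarily many indices. This flexibility lets one deposit unbounded $\ell^2$-mass at the sparse heights $Y_{j_m}$ at which $\eta(CY_{j_m})$ is geometrically negligible, producing divergence of $\sum v_k^2$ at cost only $\sum_m 2^{-m} < \infty$ in the $\eta$-weighted series, and thereby resolving the three-way conflict.
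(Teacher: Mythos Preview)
Your proposal is correct and follows essentially the same approach as the paper. The paper's proof also combines the dyadic Parseval/Cauchy estimates on circles $|z|=1-2^{-k}$ (its Lemma~5.3) with the factorization $\Psi(a)=a^2\eta(a)$ to reduce the embedding to the pair of conditions $\sum_k v_k^2=\infty$ and $\sum_k \Psi(c\,v_k)<\infty$, and then constructs $v_k$ via the same ``dwell at a height long enough to accumulate unit $\ell^2$-mass'' mechanism (its Lemma~5.2); your block construction with sparse heights $Y_{j_m}$ and lengths $L_{j_m}\asymp Y_{j_m}^{-2}$ is a mild variant of the paper's partition of the levels $t_n$ determined by $\Psi(\sqrt{t_n})/t_n=1/n^2$ into $N_n\asymp 1/t_n$ points.
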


This result hinges on the following elementary lemma, which is the first part of the statement in \thref{THM:Bwlp}. 

\begin{lemma}\thlabel{LEM:A1} Let $\Psi$ be a continuous function on $[0,\infty)$ with the property that $\Psi(x)/x^2 \downarrow 0$ as $x\downarrow 0$. Then there exists a majorant $w$ with the properties that 
\begin{enumerate}
    \item[(i)] $\int_0^1 \frac{w^2(t)}{t}dt = \infty$.
    \item[(ii)] $\sum_{n=0}^\infty \Psi \left( \, w(2^{-n} ) \, \right) < \infty$.
\end{enumerate}
\end{lemma}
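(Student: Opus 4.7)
The plan is to build $w$ from prescribed values $a_n := w(2^{-n})$ at the dyadic points and interpolate. If $(a_n)$ is non-increasing and the interpolation is monotone on each ring $[2^{-n-1}, 2^{-n}]$, then a dyadic decomposition yields $\int_0^1 w^2(t)/t\, dt \asymp \sum_n a_n^2$ (up to a factor of $\log 2$), so condition (i) reduces to $\sum_n a_n^2 = \infty$, while (ii) is precisely $\sum_n \Psi(a_n) < \infty$. Interpolating $w$ on each ring by a power function will translate the majorant condition ``$w(t)/t^{\gamma}$ non-increasing on $(0,1]$'' into the simple geometric lower bound $a_{n+1}/a_n \geq 2^{-\gamma}$. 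Thus the problem reduces to producing a non-increasing sequence $(a_n)$ with at-most-geometric decay of ratio $2^{-\gamma}$ for some fixed $\gamma \in (0,1)$, simultaneously satisfying the opposing summability conditions $\sum a_n^2 = \infty$ and $\sum \Psi(a_n) < \infty$.

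To construct such a sequence, fix $\gamma = 1/2$ and pick the level values $y_k := 2^{-k/4}$, so that $y_{k+1}/y_k = 2^{-1/4} > 2^{-\gamma}$. Form $(a_n)$ by placing, for each integer $k \geq 0$, a block of length $L_k \geq 1$ with constant value $y_k$. Within a block the ratio of consecutive entries is $1$, and across block boundaries it is $2^{-1/4}$, so the geometric lower bound holds in either case, and $(a_n)$ is non-increasing with $a_n \to 0$. Writing the hypothesis $\Psi(x)/x^2 \downarrow 0$ as $\Psi(x) = \epsilon(x) x^2$ with $\eta_k := \Psi(y_k)/y_k^2 \downarrow 0$, the two summability conditions translate to
\[
\sum_k L_k\, 2^{-k/2} = \infty \qquad \text{and} \qquad \sum_k L_k\, 2^{-k/2}\, \eta_k < \infty.
\]
This is the principal---and only---delicate point, a standard null-sequence balancing trick: since $\eta_k \to 0$, pick a strictly increasing sparse subsequence $(k_j)$ along which $\eta_{k_j} \leq 2^{-j}$, then set $L_{k_j} := \lceil 2^{k_j/2} \rceil$ and $L_k := 1$ at all other indices. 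The $(k_j)$-contribution to the first sum is $\geq \sum_j 1 = \infty$, while its contribution to the second is $\lesssim \sum_j 2^{-j} < \infty$; the contribution from the remaining indices is dominated on both sides by the convergent geometric series $\eta_0 \sum_k 2^{-k/2}$.

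It remains to extend $(a_n)$ to a function $w$ on $(0,1]$. On each ring $[2^{-n-1}, 2^{-n}]$ set
\[
w(t) := a_n\, (2^n t)^{\beta_n}, \qquad \beta_n := \log_2(a_n/a_{n+1}) \in \{0,\, 1/4\} \subseteq [0,\gamma],
\]
so $w(2^{-n}) = a_n$, $w$ is continuous and non-decreasing on $(0,1]$, and
\[
w(t)/t^{\gamma} = a_n\, 2^{n\beta_n}\, t^{\beta_n - \gamma}
\]
is non-increasing on each ring (since $\beta_n - \gamma \leq 0$); a direct check shows the two expressions for $w(t)/t^\gamma$ agree at the common endpoint $t = 2^{-n}$, so $w/t^{\gamma}$ is non-increasing on all of $(0,1]$, i.e., $w$ is a majorant. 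Monotonicity of $w$ on each ring justifies the dyadic comparison and gives $\int_0^1 w^2(t)/t\, dt \asymp \sum_n a_n^2 = \infty$, establishing (i); property (ii) follows directly from the choice of $L_k$. The only real obstacle is the $L_k$-balancing in the preceding paragraph, which is precisely where the hypothesis $\Psi(x)/x^2 \downarrow 0$ is used essentially.
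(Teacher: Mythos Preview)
Your proof is correct, and the overall strategy matches the paper's: both arguments reduce the problem to building a non-increasing sequence $a_n=w(2^{-n})$ with bounded consecutive ratios, $\sum_n a_n^2=\infty$ and $\sum_n\Psi(a_n)<\infty$, and then interpolate. The difference lies in how the balancing is carried out. The paper sets $\alpha(t)=\Psi(\sqrt t)$, chooses \emph{adaptive levels} $t_n$ by solving $\alpha(t_n)/t_n=1/n^2$, fills each gap $(t_{n+1},t_n]$ with a uniform partition of $N_n\asymp 1/t_n$ points so that $\sum_k t_{n,k}\asymp 1$ for every $n$, and afterwards pads the sequence with extra points in each dyadic shell to force the ratio bound; the interpolation is piecewise linear. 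You instead fix \emph{geometric levels} $y_k=2^{-k/4}$ once and for all and put the adaptivity into the block lengths $L_k$, chosen via a sparse subsequence along which $\eta_{k_j}\le 2^{-j}$; the interpolation is by powers $t^{\beta_n}$ on each dyadic ring. Your route makes the majorant verification entirely transparent (the exponents $\beta_n\in\{0,1/4\}$ are automatically $\le\gamma=1/2$, so $w(t)/t^{\gamma}$ is genuinely non-increasing with no padding step), at the cost of a slightly less canonical choice of levels. Both executions exploit exactly the same piece of the hypothesis, namely that $\Psi(x)/x^2\to 0$, and nothing more.
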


\begin{proof} Note that condition $(i)$ on a majorant $w$ is equivalent to the assumption
\[
\sum_{n=0}^\infty w^2(2^{-n} ) \asymp \sum_{n=0}^\infty \int_{2^{-(n+1)}}^{2^{-n}} \frac{w^2(t)}{t}dt = \int_0^1 \frac{w^2(t)}{t}dt =  \infty,
\]
Set $\alpha(t) := \Psi(\sqrt{t})$ and note that it suffices to construct a decreasing sequence of positive numbers $(w_n)_n$ with the following properties:
\begin{enumerate}
    \item[(a.)] $\sum_n w_n = \infty$.
    \item[(b.)] $\sum_n \alpha(w_n) < \infty$.
    \item[(c.)] There exists a universal constant $C>1$, such that $w_n \leq Cw_{n+1}$ for all $n$.
\end{enumerate}
Having done so, define $w$ as the piecewise linear function with graph-nodes $w(2^{-n}) = \sqrt{w_n}$ for $n=0,1, 2, \dots$. Indeed, the conditions $(a.)$ and $(b.)$ ensure that $(i)$ and $(ii)$ hold, respectively. To see that $(c.)$ ensures that $w$ is a majorant, let $0<s<t\leq 1$ and let $m$ be the largest integer with $s\leq (1+C)^{-m}$ while $n$ is the smallest integer with $t \geq (1+C)^{-n}$. Hence $m\geq n$, and pick $0<\gamma <1$ such that $(1+C)^{-\gamma} C \leq 1$. It follows that
\[
\frac{w^2(t)}{t^{\gamma}} \leq (1+C)^{n\gamma} w_m \leq (1+C)^{-\gamma(m-n)} C^{(n-m)}  \frac{w_m}{(1+C)^{-m\gamma}} \leq C_{\gamma} \frac{w^2(s)}{s^\gamma},
\]
for some constant $C_{\gamma}>0$ only depending on $\gamma$. This verifies that $w$ indeed defines a majorant. We shall primarily show that there exists a sequence $(w_n)$ satisfying $(a.)$ and $(b.)$. Since by assumption $\alpha(t)/t$ decreases to $0$ as $t\to 0+$, there exists a (unique) decreasing sequence $(t_n)_n$ with $\alpha(t_n)/t_n = 1/n^2$. For each $n$, consider a uniform partition of $(t_{n+1},t_n]$ of $N_n$ points $t_{n,k} = t_{n+1} + \frac{k}{N_n}(t_n - t_{n+1})$ with $k=1, \dots, N_n$. Note that
\[
\sum_{k=0}^{N_n} t_{n,k} \asymp t_n N_n , \qquad n=1,2,\dots
\]
Choose $N_n \asymp 1/t_n$ and let $(w_n)_n$ be the joint sequence $(t_{n,k})_{k=1}^{N_n}$ $n=1,2, \dots$. It follows that 
\[
\sum_{n} w_n = \sum_{n=1}^\infty \sum_{k=1}^{N_n} t_{n,k} \gtrsim \sum_{n=1}^\infty 1 = +\infty,
\]
while 
\[
\sum_{n} \alpha(w_n) = \sum_{n=1}^\infty \sum_{k=1}^{N_n} \frac{\alpha(t_{n,k})}{t_{n,k}} t_{n,k} \leq \sum_{n=1}^\infty \frac{1}{n^2} \sum_{k=1}^{N_n} t_{n,k} \lesssim \sum_{n=1}^\infty \frac{1}{n^2} < \infty.
\]
This construction yields a decreasing sequence $(w_n)$ which satisfies the conditions $(a.)$ and $(b.)$. If necessary, we may add a sequence of positive real numbers $\widetilde{w}_n \in (2^{-(n+1)},2^{-n}]$ for each $n=1,2,\dots$ to the sequence $(w_n)_n$, and re-label it in decreasing order. This ensures that the quotient between two consecutive numbers does not exceed $2$, hence $(c.)$ holds, while $(a.)$ and $(b.)$ still remain intact. 
\end{proof}

Before turning to the proof of \thref{THM:Bwlp}, we shall need the following estimate on the Fourier coefficients of Bloch functions.

\begin{lemma}\thlabel{LEM:BwFourier} There exists a constant $C>0$, such that for any $f \in \B_0(w)$, the following estimates hold:

\begin{equation}\label{EQ:sumBw1}
\frac{1}{N^2}\sum_{n=1}^N n^2 \abs{\widehat{f}(n)}^2 \leq C\norm{f}^2_{w}  w(1/N)^2, \qquad N=1,2,3,\dots
\end{equation}
In particular, we have 
\[
\abs{\widehat{f}(n)} \leq C w(1/n) \norm{f}_{w}, \qquad n=1,2,3,\dots. 
\]
    
\end{lemma}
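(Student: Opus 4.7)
The plan is to exploit the pointwise bound on $f'$ provided by the $\mathcal{B}_0(w)$-norm together with Parseval's identity on circles of radius $r<1$, and then select $r$ as a function of $N$.

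First, by the very definition of the $\mathcal{B}_0(w)$-norm we have the pointwise estimate
\[
|f'(z)| \leq \|f\|_w \frac{w(1-|z|)}{1-|z|}, \qquad z \in \mathbb{D}.
\]
Writing $f(z)=\sum_{n=0}^\infty \widehat{f}(n)z^n$ and $f'(z) = \sum_{n=1}^\infty n\widehat{f}(n)z^{n-1}$, Parseval's identity on the circle of radius $r \in (0,1)$ yields
\[
\sum_{n=1}^\infty n^2 |\widehat{f}(n)|^2 r^{2(n-1)} \;=\; \int_{\mathbb{T}} |f'(r\zeta)|^2 \, dm(\zeta) \;\leq\; \|f\|_w^2 \, \frac{w(1-r)^2}{(1-r)^2}.
\]

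Next I would specialize to $r = 1 - 1/N$. For $1 \leq n \leq N$ we have $r^{2(n-1)} \geq (1-1/N)^{2N} \geq c_0 > 0$ for an absolute constant $c_0$ (the quantity converges to $e^{-2}$). Restricting the left-hand sum to $n \leq N$ and dropping the factor $r^{2(n-1)}$ below by $c_0$ gives
\[
c_0 \sum_{n=1}^N n^2 |\widehat{f}(n)|^2 \;\leq\; \|f\|_w^2 \, \frac{w(1/N)^2}{(1/N)^2} \;=\; \|f\|_w^2 \, N^2 w(1/N)^2,
\]
which is precisely \eqref{EQ:sumBw1} after dividing through by $c_0 N^2$.

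For the pointwise bound on $|\widehat{f}(n)|$, observe that $n^2 |\widehat{f}(n)|^2 \leq \sum_{k=1}^n k^2 |\widehat{f}(k)|^2$, and applying \eqref{EQ:sumBw1} with $N = n$ gives $n^2 |\widehat{f}(n)|^2 \leq C n^2 w(1/n)^2 \|f\|_w^2$, from which $|\widehat{f}(n)| \leq C w(1/n) \|f\|_w$ follows. There is no real obstacle in this proof; the only delicate point is making sure that $(1-1/N)^{2(n-1)}$ remains uniformly bounded below on $1 \leq n \leq N$, which is elementary.
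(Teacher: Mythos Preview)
Your argument is correct and essentially identical to the paper's own proof: both apply Parseval on the circle $|z|=r$, bound $\int_{\T}|f'(r\zeta)|^2\,dm$ by $\|f\|_w^2\,w(1-r)^2/(1-r)^2$, specialize to $r=1-1/N$, and use that $r^{2(n-1)}$ is bounded below for $1\le n\le N$. Your derivation of the pointwise coefficient bound by keeping only the $n$-th term is exactly what the paper does as well. (One trivial edge case: for $N=1$ the bound $(1-1/N)^{2N}\ge c_0$ fails, but then $r^{2(n-1)}=0^0=1$ anyway, so the estimate still holds.)
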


\begin{proof} Note that for any holomorphic function $f(z)= \sum_{n=0}^\infty \widehat{f}(n) z^n$ on $\D$, an application of Parseval's theorem gives
\begin{equation}\label{EQ:Parseval}
\sum_{n=1}^{\infty} n^2 \abs{\widehat{f}(n)}^2 r^{2(n-1)} = \int_{\T} \abs{f'(r\zeta)}^2 dm(\zeta)  \qquad 0<r<1,
\end{equation}
Fix a positive integer $N>0$ and choose $r_N = 1-1/N$, which yields
\[
\sum_{n=1}^N n^2 \abs{\widehat{f}(n)}^2 \leq C \int_{\T} \abs{f'(r_N \zeta)}^2 dm(\zeta) \leq C\norm{f}^2_{w} N^2 w(1/N)^2, \qquad N=1,2,3,\dots,
\]
This proves \eqref{EQ:sumBw1} and the second statement follows from omitting all terms in the above sum, except for the $N$-th term. 
\end{proof}



\begin{proof}[Proof of \thref{THM:Bwlp}] 
According to \thref{LEM:A1}, there exists a majorant $w$ with $\sum_n w^2(2^{-n}) = \infty$, but such that $\sum_n \Psi(w(2^{-n})) < \infty$. Set $\alpha(t) := \Psi(t)/t^2$, which assumed to be a continuous increasing function on $[0,1)$ with $\alpha(0)=0$. Fix an arbitrary function with $\norm{f}_{w}=1$. Using the majorant assumption on $w$, and \thref{LEM:BwFourier}, we obtain 
\[
\alpha \left( \abs{\widehat{f}(j)} \right) \leq \alpha \left( cw(1/j) \right) \leq \alpha \left( c'w(2^{-n}) \right) , \qquad 2^{n-1}\leq j < 2^n,
\]
where $c,c'>0$ are numerical constants. This observation in conjunction with another application of \thref{LEM:BwFourier} implies 

\begin{equation*}
\begin{split}
\sum_{n=1}^\infty \Psi \left( \abs{\widehat{f}(n)} \right) = \sum_{n=1}^\infty \sum_{2^{n-1}\leq j < 2^n } \Psi \left( \abs{\widehat{f}(j)} \right)  & \leq  \sum_{n=1}^\infty \alpha \left( c' w(2^{-n}) \right) \left( 2^{-2n} \sum_{2^{n-1}\leq j < 2^n}  \abs{\widehat{f}(j)}^2 j^2 \right) \\ \leq 
C \sum_{n=1}^\infty \Psi \left( c' w(2^{-n}) \right). 
\end{split}
\end{equation*}
Since the right-hand side is finite, the proof is complete.

\end{proof}

\section{Proof of main results}\label{SEC6}

\subsection{Simultaneous approximation and cyclic inner functions} 
We are now ready to deduce \thref{THM:CAUUniqlp} and \thref{THM:CycinnerlPsi}, respectively. 

\begin{proof}[Proof of \thref{THM:CAUUniqlp}] \thref{THM:Bwlp} ensures the existence of a majorant $w$ which violates the square Dini-condition, and gives rise to the continuous containment of $\B_0(w)$ in $\ell^{\Psi}_a$. Hence, there exists a constant $C>0$, possibly depending on $w$ and $\Psi$, such that 
\[
\norm{f}_{\Psi} \leq C \norm{f}_w, \qquad f \in \B_0(w).
\]
According Theorem 1 in \cite{limani2024asymptotic}, there exists compact sets $K \subset \T$ of Lebesgue measure arbitrary close to full, such that for any polynomial $f$ and any continuous function $g$ on $K$, there exists holomorphic polynomials $(Q_n)_n$ such that 
\[
\lim_n \norm{Q_n-f}_w + \sup_{\zeta \in K} \abs{Q_n(\zeta)-g(\zeta)} =0.
\]
Now using Theorem, we get the same conclusion for $\ell^{\Psi}_a$ with $f\in \B_0(w)$. But since the polynomials are dense in $\ell^{\Psi}_a$, the statements remains true for all $f \in \ell^{\Psi}_a$.
\end{proof}

\begin{proof}[Proof of \thref{THM:CycinnerlPsi}]
Note that since the polynomials are dense in $\ell^{\Psi}_a$, it suffices to show that there are polynomials $(Q_n)_n$ with $\Theta Q_j \to 1$ in $\ell^{\Phi}_a$. However, this is an immediate consequence of \thref{THM:Bwlp} in conjunction with \thref{THM:CycinnerBw}.

\end{proof}

\subsection{Images of normalized Cauchy transforms}
Note that if $\Psi$ is a Young function with $\Psi(x)/x^2 \uparrow \infty$ as $x\to 0$, then $\Ka_{\mu}L^1(\mu) \cap \ell_a^{\Psi} = \Ka_{\mu}L^2(\mu) \cap \ell_a^{\Psi}$. Indeed, this follows from the containment $\ell_a^{\Psi} \subset H^2$ in conjunction with $\Ka_{\mu}$ being a injective on $L^2(\mu)$. This implies that \thref{cor:BM1type} is an equivalent reformulation of \thref{THM:UniqNCT}. However, part $(i)$ of \thref{cor:BM1type} is now an immediate consequence of \thref{THM:Bwlp} and \thref{THM:CycinnerBw}, while part $(ii)$ follows from \thref{THM:Bwlp} in conjunction with \thref{THM:CAUUniqlp}.

\subsection{Non-uniform admissible majorants}
It only remains to prove \thref{THM:BM1type} and \thref{THM:SAell1w} on non-admissible majorants. To this end, let $w=(w_n)_n$ be decreasing positive real numbers which tend to zero, and denote by $\ell^1_a(w)$ the Banach space of holomorphic functions $f$ in $\D$, equipped with the norm
\[
\norm{f}_{\ell^1(w)} := \sum_{n=0}^\infty \abs{\widehat{f}(n)}w_n < \infty.
\]
The following lemma will be our principal device in this section.
\begin{lemma} \thlabel{PROP:SAl1w} Let $(w_n)_n$ be positive numbers with $w_n \downarrow 0$. Then for any $0<\delta<1 $, there exists a compact set $K\subset \T$ with $1-\delta <m(K)<1$, and holomorphic polynomials $(Q_n)_n$ satisfying the following properties
\begin{enumerate}
    \item[(i)] $\lim_n \norm{Q_n}_{\ell^1(w)} =0$
    \item[(ii)] $\lim_n \sup_{\zeta \in K} \abs{Q_n(\zeta)-1}=0$.

\end{enumerate}
    
\end{lemma}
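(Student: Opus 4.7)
The plan is to mirror the strategy used for the proof of \thref{THM:CAUUniqlp}: produce an auxiliary majorant $\widetilde w$ tailored to the sequence $(w_n)$, apply Theorem 1 of \cite{limani2024asymptotic} to obtain simultaneous approximation in $\B_0(\widetilde w)$, and convert the resulting estimates into the desired $\ell^1(w)$-approximation via the Fourier coefficient decay from \thref{LEM:BwFourier}.

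First, I would construct a majorant $\widetilde w$ on $[0,1)$ violating the square Dini condition $\int_0^1 \widetilde w^2(t)/t\, dt = \infty$ and, in addition, satisfying the summability $\sum_{n=1}^\infty \widetilde w(1/n)\, w_n < \infty$. The construction is a variant of \thref{LEM:A1}: using that $w_n \downarrow 0$, one exploits a rapidly increasing subsequence $(n_k)$ along which $w_{n_k}$ decays very fast, and then configures the graph-nodes $\widetilde w(1/n_k)$ so as to fail Dini while being summable against $(w_n)$. The doubling estimates in the verification step of \thref{LEM:A1}(c.) carry over to show that $\widetilde w$ qualifies as a majorant in the sense of Section~\ref{SEC4}.

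With $\widetilde w$ in hand, Theorem 1 of \cite{limani2024asymptotic} produces a compact set $K \subset \T$ with $1-\delta < m(K) < 1$ enjoying the simultaneous approximation property in $\B_0(\widetilde w)$: for any $f \in \B_0(\widetilde w)$ and any continuous $g$ on $K$, there are polynomials $(P_j)$ with $\|P_j-f\|_{\widetilde w} + \sup_K |P_j - g| \to 0$. Specializing to $f \equiv 0$ and $g \equiv 1$ yields polynomials $Q_n$ with $\|Q_n\|_{\widetilde w} \to 0$ and $\sup_K |Q_n - 1| \to 0$, which is clause (ii). For clause (i), the Fourier coefficient bound $|\widehat{Q_n}(k)| \leq C \widetilde w(1/k)\, \|Q_n\|_{\widetilde w}$ from \thref{LEM:BwFourier} gives
\[
\|Q_n\|_{\ell^1(w)} = \sum_k |\widehat{Q_n}(k)|\, w_k \leq C \|Q_n\|_{\widetilde w} \sum_k \widetilde w(1/k)\, w_k \longrightarrow 0,
\]
using the summability $\sum_k \widetilde w(1/k)\, w_k < \infty$ secured in the first step together with $\|Q_n\|_{\widetilde w} \to 0$.

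The principal obstacle is the construction of $\widetilde w$ in the first step: the majorant regularity, the Dini failure, and the summability against $(w_n)$ must all hold simultaneously, which is genuinely delicate for slowly decaying sequences $(w_n)$. The construction parallels \thref{LEM:A1}, where the role of the Young function summability condition $\sum_n \Psi(\widetilde w(2^{-n})) < \infty$ is replaced by the linear pairing condition against $(w_n)$, and requires care to balance the choice of nodes so that divergence of the Dini sum remains compatible with convergence of the weighted pairing sum.
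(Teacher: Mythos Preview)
Your approach has a genuine gap in the first step: the simultaneous requirements on $\widetilde w$ --- failing the square Dini condition while satisfying $\sum_{n} \widetilde w(1/n)\, w_n < \infty$ --- are in general \emph{incompatible}, not merely delicate. Take $w_n = 1/\log(n+2)$. The majorant regularity forces $\widetilde w(1/n) \asymp \widetilde w(2^{-k})$ for $2^k \le n < 2^{k+1}$, so
\[
\sum_{n\ge 1} \widetilde w(1/n)\, w_n \;\gtrsim\; \sum_{k\ge 1} 2^{k}\, \widetilde w(2^{-k})\, w_{2^{k+1}} \;\asymp\; \sum_{k\ge 1} \frac{2^k\, \widetilde w(2^{-k})}{k}.
\]
Convergence of the right-hand side forces $2^k \widetilde w(2^{-k})/k \to 0$, hence $\widetilde w(2^{-k}) = o(k\,2^{-k})$, and therefore $\sum_k \widetilde w(2^{-k})^2 < \infty$. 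Thus the square Dini condition is \emph{satisfied}, not violated, and no admissible $\widetilde w$ exists. The Bloch-space route through \thref{LEM:BwFourier} and Theorem~1 of \cite{limani2024asymptotic} cannot produce polynomials small in $\ell^1_a(w)$ for such slowly decaying weights; this is a hard obstruction, not a matter of care in balancing nodes.

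The paper instead uses a direct, elementary construction \`a la Khrushchev that bypasses Bloch spaces entirely. One fixes a smooth outer building block $F_{\gamma,\delta}$ with $F_{\gamma,\delta}(0)=0$ and $|F_{\gamma,\delta}-1|=\gamma$ on an arc $A_\delta$ of length $1-\delta$, and then composes with high powers: $f_n(z)=F_{\gamma_n,\delta_n}(z^{m_n})$. The Fourier support of $f_n$ lies in $m_n\cdot\{1,2,3,\dots\}$, so by monotonicity of $(w_n)$ and Cauchy--Schwarz,
\[
\norm{f_n}_{\ell^1(w)} = \sum_{k\ge 1} \big|\widehat F_{\gamma_n,\delta_n}(k)\big|\, w_{k m_n} \;\le\; w_{m_n}\Big(\sum_{k\ge 1} k^{-2}\Big)^{1/2} \norm{F'_{\gamma_n,\delta_n}}_{L^2(\T)}.
\]
After $\gamma_n,\delta_n$ are fixed one simply chooses $m_n$ large enough that $w_{m_n}\norm{F'_{\gamma_n,\delta_n}}_{L^2(\T)}\to 0$; the set $K$ is the intersection of the preimages of $A_{\delta_n}$ under $z\mapsto z^{m_n}$, with $\sum_n \delta_n \le \delta$. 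The decisive device --- pushing the entire Fourier support past an arbitrarily large index $m_n$ --- is exactly what your Bloch-space embedding cannot mimic, since there the coefficient decay at index $n$ is pinned to the fixed profile $\widetilde w(1/n)$.
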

\begin{proof}
The following construction is inspired by S. Khrushchev in \cite{khrushchev1978problem}. Fix two numbers $\gamma, \delta \in (0,1)$, and let $A_\delta \subset \T$ be a closed arc of length $1-\delta$. Define a smooth real-valued functions $\psi_{\gamma,\delta}$ on $\T$ satisfying the properties:
\begin{enumerate}
    \item[(i)] $\int_{\T} \psi_{\gamma, \delta}(\zeta) dm(\zeta) = 0$,
    \item[(ii)] $\psi_{\gamma, \delta}(\zeta) = \log \gamma$, \qquad $\zeta \in A_{\delta}$.
\end{enumerate}
we now form the outer function $G_{\gamma, \delta}$ defined by 
\[
G_{\gamma, \delta}(z) = \exp \left( \int_{\T} \frac{\zeta +z}{\zeta -z} \psi_{\gamma, \delta}(\zeta) dm(\zeta) \right) , \qquad z \in \D.
\]
Set $F_{\gamma, \delta}= 1-G_{\gamma,\delta}$ which is easily seen to be bounded and holomorphic in $\D$, which also extends to a smooth function to $\T$. We now record the following properties:
\begin{enumerate}
    \item[(i)] $F_{\gamma, \delta}(0)=0$.
    \item[(ii)] $\abs{F_{\gamma, \delta}(\zeta)-1} = \gamma$, \qquad $\zeta \in A_{\delta}$.
    \item[(iii)] $$N(\gamma, \delta) := \left( \int_{\T} \abs{F_{\gamma,\delta}'(\zeta)}^2 dm(\zeta) \right)^{1/2}$$ tending to $+\infty$ as either $\delta, \gamma \to 0+$.
    
\end{enumerate}
Let $(m_n)_n$ be an increasing sequence of positive integers with $m_n \to \infty$ to be chosen in a moment, and consider the functions
\[
f_{n,\gamma, \delta}(z) := F_{\gamma,\delta}(z^{m_n}), \qquad z\in \D.
\]
Let $E_{n,\delta}$ denote the pre-image of $A_{\delta}$ under the monomial $z^{m_n}$, and note that $m(E_{n,\delta}) = m(A_{\delta})=1-\delta$. It follows from $(ii)$ that 
\begin{equation}\label{EQ:fnE}
\abs{f_{n,\gamma, \delta}(\zeta)-1}= \gamma, \qquad \zeta \in E_{n,\delta}.
\end{equation}
We now give an estimate on the Fourier coefficients of $f_{n,\gamma, \delta}$ and recall that $f_{n,\gamma, \delta}(0)=0$. To this end, note that
\begin{equation}\label{EQ:l1w}
\sum_{k=0}^\infty \abs{\widehat{f}_{n,\gamma, \delta}(k)}w_k = \sum_{k=1}^\infty \abs{\widehat{F}_{\gamma, \delta}(k)}w_{m_n k} \leq w_{m_n} \left( \sum_{k=1}^\infty \frac{1}{k^2} \right)^{1/2} N(\gamma, \delta).
\end{equation}
Here, we used the monotonicity assumption on the sequence $(w_n)_n$ and the Cauchy-Schwarz inequality. Pick a sequence $(\delta_n)_n$ of positive numbers with the property that $\sum_n \delta_n \leq \delta$ and let $(\gamma_n)_n$ be any sequence of positive numbers tending to zero. Setting $K := \cap_n E_{n, \delta_n}$, and note that the assumption on the $\delta_n$'s imply that $m(K) \geq 1-\delta$. We now choose the positive integers $(m_n)_n$ to be large enough so that 
\[
w_{m_n} N(\gamma_n, \delta_n) \to 0, \qquad n\to \infty,
\]
and for the sake of brevity set $f_n := f_{n,\gamma_n, \delta_n}$. It follows from \eqref{EQ:fnE} that $f_n \to 1$ uniformly on $K$, while \eqref{EQ:l1w} implies that $f_n \to 0$ in $\ell^1_a(w)$. Recalling that each $f_n$ is smooth up to $\T$, we can approximate $f_n, f'_n$ uniformly on $\cD$ by holomorphic polynomials. It follows from the Cauchy-Schwartz type estimate in \eqref{EQ:l1w} that such polynomials satisfy the properties $(i)$ and $(ii)$ in the statement, hence the proof is complete.
\end{proof}

 We now turn to our main task.

\begin{proof}[Proof of \thref{THM:SAell1w}] According to \thref{PROP:SAl1w}, we can for any small desirable number $\delta >0$, find a compact set $K \subset \T$ with $1-\delta<m(K)<1$, and corresponding holomorphic functions $(f_n)_n$ that satisfy $f_n \to 1$ uniformly on $K$, while $f_n \to 0$ in $\ell^1_a(w)$. Now let $\mu \in M(K)$ be an arbitrary element with $\sup_{j\geq 0} \frac{\abs{\widehat{\mu}(j)}}{w_j} < \infty$. Then for any integer $m \geq 0$, we have
\begin{equation*}
\begin{split}
\abs{\widehat{\mu}(m)} = \lim_n \abs{\int_K \conj{f_n(\zeta)} \zeta^{-m} d\mu(\zeta)} = \lim_n \abs{\int_{\T} \conj{ f_n(\zeta)\zeta^m} d\mu(\zeta)} = \\ 
\lim_n \abs{\sum_{j=0}^\infty \conj{\widehat{f_n}(j+m)} \widehat{\mu}(j)} \leq \limsup_n \norm{f_n}_{\ell^1(w)} \sup_{j\geq 0} \frac{\abs{\widehat{\mu}(j)}}{w_j}=0.
\end{split}
\end{equation*}
According to the F. and M. Riesz Theorem, there exists an function $g \in H^1$, such that $\conj{\mu}= g dm$. Now since $m(K)<1$, we conclude that $g$ must vanish on a set positive Lebesgue measure, hence $\mu \equiv 0$. This completes the proof.

\end{proof}

We remark that the duality argument above actually shows that for any $f\in \ell^1_a(w)$ and any function $g$ continuous on $K$, there exists holomorphic polynomials $(Q_n)_n$ with $Q_n \to f$ in $\ell^1_a(w)$, while $Q_n \to g$ uniformly on $K$. The proof of \thref{THM:BM1type} is now an immediate consequence of this remark in conjunction with \thref{PROP:dbrX'}.

\bibliographystyle{siam}
\bibliography{mybib}

\begin{thebibliography}{10}

\bibitem{aleksandrovinv}
{\sc A.~B. Aleksandrov}, {\em Invariant subspaces of shift operators. {A}n
  axiomatic approach}, Zap. Nauchn. Sem. Leningrad. Otdel. Mat. Inst. Steklov.
  (LOMI), 113 (1981), pp.~7--26, 264.

\bibitem{dbrcont}
{\sc A.~Aleman and B.~Malman}, {\em Density of disk algebra functions in de
  {B}ranges--{R}ovnyak spaces}, C. R. Math. Acad. Sci. Paris, 355 (2017),
  pp.~871--875.

\bibitem{anderson1991inner}
{\sc J.~Anderson, J.~Fernandez, and A.~L. Shields}, {\em {Inner functions and
  cyclic vectors in the Bloch space}}, Transactions of the American
  Mathematical Society, 323 (1991), pp.~429--448.

\bibitem{cima2000backward}
{\sc J.~A. Cima and W.~T. Ross}, {\em {The backward shift on the Hardy space}},
  no.~79, American Mathematical Soc., 2000.

\bibitem{dyakonov2002weighted}
{\sc K.~M. Dyakonov}, {\em {Weighted Bloch spaces, and}}, Journal of the London
  Mathematical Society, 65 (2002), pp.~411--417.

\bibitem{dyakonov2002zero}
\leavevmode\vrule height 2pt depth -1.6pt width 23pt, {\em {Zero sets and
  multiplier theorems for star-invariant sub spaces}}, Journal d’Analyse
  Math{\'e}matique, 86 (2002), pp.~247--269.

\bibitem{havin2003admissible}
{\sc V.~Havin and J.~Mashreghi}, {\em {Admissible majorants for model subspaces
  of H2, Part I: Slow winding of the generating inner function}}, Canadian
  Journal of Mathematics, 55 (2003), pp.~1231--1263.

\bibitem{havin2003admissible2}
\leavevmode\vrule height 2pt depth -1.6pt width 23pt, {\em {Admissible
  majorants for model subspaces of H2, Part II: fast winding of the generating
  inner function}}, Canadian Journal of Mathematics, 55 (2003), pp.~1264--1301.

\bibitem{kahane1971comportement}
{\sc J.-P. Kahane and Y.~Katznelson}, {\em Sur le comportement radial des
  fonctions analytiques}, CR Acad. Sci. Paris S{\'e}r. AB, 272 (1971),
  pp.~A718--A719.

\bibitem{kahane2000series}
{\sc J.-P. Kahane and V.~Nestoridis}, {\em {S{\'e}ries de Taylor et s{\'e}ries
  trigonom{\'e}triques universelles au sens de Menchoff}}, Journal de
  math{\'e}matiques pures et appliqu{\'e}es, 79 (2000), pp.~855--862.

\bibitem{khrushchev1978problem}
{\sc S.~V. Khrushchev}, {\em The problem of simultaneous approximation and of
  removal of the singularities of {C}auchy type integrals}, Trudy
  Matematicheskogo Instituta imeni VA Steklova, 130 (1978), pp.~124--195.

\bibitem{limani2024asymptotic}
{\sc A.~Limani}, {\em {Asymptotic polynomial approximation in the Bloch
  space}}, arXiv preprint arXiv:2403.08723,  (2024).

\bibitem{limani2022abstract}
{\sc A.~Limani and B.~Malman}, {\em An abstract approach to approximation in
  spaces of pseudocontinuable functions}, Proceedings of the American
  Mathematical Society, 150 (2022), pp.~2509--2519.

\bibitem{limani2023problem}
\leavevmode\vrule height 2pt depth -1.6pt width 23pt, {\em {On the problem of
  smooth approximations in H (b) and connections to subnormal operators}},
  Journal of Functional Analysis, 284 (2023), p.~109803.

\bibitem{limani2023m_z}
{\sc A.~Limani and A.~Nicolau}, {\em {Shift invariant subspaces in the Bloch
  space}}, arXiv preprint arXiv:2306.14360,  (2023).

\bibitem{lindenstrauss2013classical}
{\sc J.~Lindenstrauss and L.~Tzafriri}, {\em {Classical Banach Spaces I:
  Sequence Spaces}}, vol.~92, Springer Science \& Business Media, 2013.

\bibitem{makarov2005meromorphic}
{\sc N.~Makarov and A.~Poltoratski}, {\em {Meromorphic inner functions,
  Toeplitz kernels and the uncertainty principle}}, in Perspectives in
  Analysis: Essays in Honor of Lennart Carleson’s 75th Birthday, Springer,
  2005, pp.~185--252.

\bibitem{malman2022cyclic}
{\sc B.~Malman}, {\em {Cyclic inner functions in growth classes and
  applications to approximation problems}}, Canadian Mathematical Bulletin,
  (2022), pp.~1--12.

\bibitem{mashreghi2006beurling}
{\sc J.~Mashreghi, F.~Nazarov, and V.~Havin}, {\em {Beurling-Malliavin
  multiplier theorem: the seventh proof}}, St. Petersburg Mathematical Journal,
  17 (2006), pp.~699--744.

\bibitem{poltoratski2003maximal}
{\sc A.~Poltoratski}, {\em {Maximal properties of the normalized Cauchy
  transform}}, Journal of the American Mathematical Society, 16 (2003),
  pp.~1--17.

\bibitem{sarasonbook}
{\sc D.~Sarason}, {\em Sub-{H}ardy {H}ilbert spaces in the unit disk}, vol.~10
  of University of Arkansas Lecture Notes in the Mathematical Sciences, John
  Wiley \& Sons, Inc., New York, 1994.

\end{thebibliography}

\Addresses

\end{document}